\numberwithin{equation}{section}
\newtheorem{thm}{Theorem}[section]
\newtheorem{theorem}[thm]{Theorem}
\newtheorem{fact}[thm]{Fact}
\newtheorem{lemma}[thm]{Lemma}
\newtheorem{model}[thm]{Model}
\newenvironment{proof}[1][Proof]{\textbf{#1.} }{\ \rule{0.5em}{0.5em}}
\begin{document}
\thispagestyle{empty}

\begin{center}
  {\LARGE Transience of vertex-reinforced jump processes with long-range jumps}\\[3mm]
{\large Margherita Disertori\footnote{Institute for Applied Mathematics
\& Hausdorff Center for Mathematics, 
University of Bonn, \\
Endenicher Allee 60,
D-53115 Bonn, Germany.
E-mail: disertori@iam.uni-bonn.de}
\hspace{1cm} 
Franz Merkl \footnote{Mathematisches Institut, Ludwig-Maximilians-Universit{\"a}t  M{\"u}nchen,
Theresienstr.\ 39,
D-80333 Munich,
Germany.
E-mail: merkl@math.lmu.de
}
\hspace{1cm} 
Silke W.W.\ Rolles\footnote{
Department of Mathematics, CIT, 
Technische Universit{\"{a}}t M{\"{u}}nchen,
Boltzmannstr.\ 3,
D-85748 Garching bei M{\"{u}}nchen,
Germany.
E-mail: srolles@cit.tum.de}
\\[3mm]
{\small \today}}\\[3mm]
\end{center}

\newcommand{\htwo}{H^{2|2}}
\newcommand{\eps}{{\varepsilon}}
\newcommand{\Id}{\operatorname{Id}}
\newcommand{\diag}{\operatorname{diag}}
\newcommand{\supp}{\operatorname{supp}}
\newcommand{\range}{\operatorname{range}}
\newcommand{\sk}[1]{\left\langle{#1}\right\rangle}
\newcommand{\cI}{\mathcal{I}}
\newcommand{\cR}{\mathcal{R}}
\newcommand{\rr}{\mathcal{R}}
\newcommand{\W}{\mathcal{W}}
\newcommand{\E}{{\mathbb E}}
\newcommand{\R}{{\mathbb R}}  
\newcommand{\N}{{\mathbb N}}  
\newcommand{\Z}{{\mathbb Z}}
\newcommand{\cS}{{\mathcal{S}}}
\newcommand{\T}{{\mathcal{T}}}
\newcommand{\G}{{\mathcal{G}}}
\newcommand{\ceins}{c_H}
\newcommand{\cdrei}{{C}}
\newcommand{\weins}{{\overline W_1}}
\newcommand{\wzwei}{{\overline W_2}}

\begin{abstract}
  We show that the vertex-reinforced jump process on the $d$-dimensional lattice
  with long-range jumps is transient in any dimension $d$ as long as the initial
  weights do not decay too fast. 
  The main ingredients in the proof are:
  an analysis of the corresponding random
  environment on finite boxes, a comparison with a hierarchical model, and the
  reduction of the hierarchical model to a non-homogeneous effective
  one-dimensional model.
  For $d\ge 3$ we also prove transience of the vertex-reinforced jump process
  with possibly long-range jumps as long as nearest-neighbor weights are large enough.
  \footnote{\emph{MSC2020 subject classification:} primary 60K35, 60K37; secondary 60G60.}
  \footnote{\emph{Keywords and phrases:} vertex-reinforced jump process, long-range random walk,
    hierarchical model, 
non-linear supersymmetric hyperbolic sigma model.}
\end{abstract}

\section{Introduction and results}

Consider an undirected connected locally finite
graph $G$ with vertex set $\Lambda$ and edge
set $E_\Lambda$. Every edge $e=\{i,j\}\in E_\Lambda$ is given a positive weight
$W_{ij}=W_{ji}=W_e>0$.
The vertex-reinforced jump process $(Y_t)_{t\ge 0}$  (VRJP for short) is a
stochastic process in continuous time which starts at time 0 in a vertex
$0\in\Lambda$ and conditionally on $(Y_s)_{s\le t}$ jumps from $Y_t=i$ along an
edge $\{i,j\}$ at rate
\begin{align}
W_{ij}\left(1+\int_0^t1_{\{Y_s=j\}}\, ds\right).
\end{align}
The process was conceived by Wendelin Werner in 2000
and first studied by Davis and Volkov \cite{davis-volkov1,davis-volkov2}
on trees and finite graphs. For a recent overview of the subject see
  \cite{bauerschmidt-helmuth-survey2021}.
It was shown by Sabot and Tarr\`es \cite{sabot-tarres2012}
and independently by Angel, Crawford, and Kozma \cite{angel-crawford-kozma}
that for any graph of bounded degree there exists $W_1>0$ such that
the VRJP is recurrent if $W_e\le W_1$ for
all $e\in E_\Lambda$. In contrast, Sabot and Tarr\`es \cite{sabot-tarres2012}
proved that for any $d\ge 3$, there exists $W_2<\infty$
such that the VRJP on $\Z^d$ is transient
if the weights $W_e$ are equal to a constant $\ge W_2$.
Sabot and Zeng in \cite{sabot-zeng15}
proved the following zero-one law for infinite
graphs. If the graph $G$ together with the weights $W=(W_e)_{e\in E_\Lambda}$ is
vertex transitive, the VRJP is either
almost surely recurrent or it is almost surely transient.
Poudevigne \cite{poudevigne22} proved some
monotonicity in the weights $W_e$ which implies that for $\Z^d$,
$d\ge 3$, there is a unique phase transition between recurrence and transience
when the weights $W_e$ are constant. 
On $\Z^d$ for $d=1,2$, the VRJP with arbitrary constant
weights is always recurrent. In one dimension, this was shown for
$W_e=1$ by Davis and Volkov \cite{davis-volkov1} and for general constant $W$
by Sabot and Tarr\`es \cite{sabot-tarres2012}.
Sabot \cite{sabot-polynomial-decay2021} proved the result in two dimensions. 
The discrete-time process associated to VRJP on any \emph{locally finite}
graph is given by the annealed law of a random walk in random conductances;
cf.\ \cite{sabot-tarres2012} for finite graphs and \cite{sabot-zeng15}
for locally finite infinite graphs.

We will consider here 
VRJP on $\Z^d$ with long-range interactions. This is not a
locally finite graph, but VRJP can still be well-defined
as long as $\sum_{e\ni i}W_e<\infty$ for all vertices
$i\in\Lambda$. If in addition $\sup_{i\in\Lambda}\sum_{e\ni i}W_e<\infty$ holds, then
VRJP jumps only finitely often up to any finite time. 
It seems that the existing results on
VRJP have not considered this generalized context. Therefore,
we decided not to rely on existing results on infinite graphs,
but rather on finite pieces of infinite graphs only.

Also on infinite, possibly not locally finite graphs $(\Lambda,E_\Lambda)$
up to the time that
the process leaves a given finite set $\Lambda_N\subset \Lambda$ of vertices,
the discrete-time process
associated to 
VRJP is a random walk in random conductances. The corresponding random
conductance for any edge $e=\{i,j\}$ may depend on $\Lambda_N$
and can be encoded in the form $c_e=W_{ij}e^{u_i+u_j}$
with a random environment $u=(u_i)_{i\in\Lambda_N}$.
The law of $u$ is explicitly given in \cite[Theorem 2]{sabot-tarres2012}.
We consider wired boundary conditions obtained
by adding a ``wiring point'' $\rho$ to $\Lambda_N$, setting $u_\rho:=0$,
and connecting any vertex $i\in\Lambda_N$ to $\rho$
with the ``wiring weight'',
synonym ``pinning strength'',
\begin{align}
  \label{eq:def-pinning-general}
  h_i=W_{i\rho}=\sum_{\substack{j\in \Lambda\setminus\Lambda_N:\\\{i,j\}\in E_\Lambda}}W_{ij},
\end{align}
provided that this sum is finite.
The corresponding ``pinning conductances'' are given by
$c_{i\rho}=W_{i\rho}e^{u_i+u_\rho}=h_i e^{u_i}$.
Leaving $\Lambda_N$ corresponds to jumping to the wiring point $\rho$.
Let $E_{\Lambda_N}=\{e\in E_\Lambda:\; e\subseteq\Lambda_N\}$ denote the edge set
restricted to $\Lambda_N$. The edge set on the extended graph with vertex set
$\Lambda_{N}\cup \{\rho  \}$ is 
\begin{align}
\label{eq:edge-set-wired-bc}
  E_N=E_{\Lambda_N}\cup \{\{i,\rho\}:\; i\in\Lambda_N
  \text{ and }\{i,j\}\in E_\Lambda\text{ for some } j\in\Lambda\setminus\Lambda_N\}. 
\end{align}
With these notations, the law $\nu^{\Lambda_N}_{W,h}$
of the random environment $u$ for the VRJP starting in the wiring point
$\rho$ is described by
\begin{align}
  \label{eq:u-marginal}
  \nu^{\Lambda_{N}}_{W,h}(du)=
  e^{-\hspace{-0,3cm}\sum\limits_{\{i,j\}\in E_{\Lambda_{N}}}\hspace{-0,5cm}W_{ij}(\cosh(u_i-u_j)-1)-\sum\limits_{i\in\Lambda_{N}}\hspace{-0,2cm}h_i
  (\cosh u_i-1)}
  \hspace{-0,2cm}\sqrt{\sum_{T\in \mathcal{S}}\prod_{\{i,j \}\in T} W_{ij}e^{u_{i}+u_{j}}}
  \prod_{i\in \Lambda_{N}}\frac{e^{-u_i}}{\sqrt{2\pi}}\, du_i,
\end{align}
where   
$\mathcal{S}$ is the set of spanning trees over the graph
$(\Lambda_{N}\cup\{\rho\},E_N)$. 
Let $\E^{\Lambda_N}_{W,h}$ denote the expectation corresponding to $\nu^{\Lambda_{N}}_{W,h}$. 

\subsection{Results on the VRJP with long-range weights in
  any dimension $d\ge 1$}

In this subsection, we consider the following model.

\begin{model}[$d$-dimensional lattice with Euclidean long-range interactions]
\label{model:eucl}\mbox{}\\
Fix a dimension $d\in\N$ and two parameters $\overline W>0$ and $\alpha>1$.
We consider the $d$-dimensional lattice $\Z^d$ with edge weights 
$W_{ij}=W_{ji}=w(\|i-j\|_\infty)$ for $i,j\in\Z^d$, $i\neq j$, where the function 
$w:[1,\infty)\to (0,\infty)$ is monotonically decreasing and satisfies the
lower bound 
\begin{align}
  \label{eq:cond1-w}
 w(x)\ge \overline W \frac{(\log_2 x)^\alpha}{x^{2d}}
    \text{ for all }x\ge 1
\end{align}
and the summability condition
\begin{align}
  \label{eq:summability-long-range}
  \sum_{i\in\Z^d\setminus\{0\}}w(\|i\|_\infty)<\infty. 
\end{align}
\end{model}

Note that large values of the weights $W_{ij}$ correspond to weak reinforcement,
because jumps typically occur after a short time; hence the local time
cannot increase a lot before any typical jump.
Thus, the lower bound \eqref{eq:cond1-w} can be interpreted as a regime of weak reinforcement. 
We prove the following transience result.

\begin{theorem}[Transience of long-range VRJP]
  \label{thm:vrjp-transient-long-range}
The discrete time process associated to the VRJP on
$\Z^d$ started in $0$ with interactions as in Model~\ref{model:eucl}
is transient, i.e.\ it visits a.s.\ any given vertex only finitely often.
Slightly stronger, the expected number of visits to any given vertex $x\in\Z^d$ 
is finite and uniformly bounded in $x$.
\end{theorem}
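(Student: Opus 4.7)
The plan is to follow the three ingredients advertised in the abstract. First, I would pass to finite volumes $\Lambda_N = [-2^N, 2^N]^d \cap \Z^d$ with a single wired boundary point $\rho$ carrying the pinning strengths $h_i$ from \eqref{eq:def-pinning-general}, which are finite by \eqref{eq:summability-long-range}. Since the VRJP makes only finitely many jumps in finite time, the associated discrete-time process on $\Z^d$ is well defined, and its expected number of visits to a vertex $x$ is controlled by the $N \to \infty$ limit of the annealed Green's function $\E^{\Lambda_N}_{W,h}[G^{\Lambda_N}_\omega(0,x)]$ of a random walk with conductances $c_{ij} = W_{ij} e^{u_i + u_j}$ and $c_{i\rho} = h_i e^{u_i}$, under the measure \eqref{eq:u-marginal}. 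By reversibility and the standard comparison between Green's function and effective resistance, it suffices to bound the annealed escape probability from $0$ to $\rho$ in $\Lambda_N$ uniformly in $N$, and then to bound the annealed hitting probability of each $x$ from $0$ by this quantity.

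Second, to exploit the long-range structure I would construct a hierarchical lower bound. Nest $\Lambda_N$ into dyadic cubes of side $2^k$, $k = 0, \dots, N$, and define a hierarchical graph $G^{\mathrm{hier}}_N$ with edge weights $\widetilde W_{ij}$ depending only on the smallest dyadic cube containing $\{i,j\}$ and chosen so that $\widetilde W_{ij} \le W_{ij}$ on every edge. Using monotonicity of the VRJP in its weights (in the spirit of Poudevigne \cite{poudevigne22}, or directly via Rayleigh monotonicity applied to the random conductances after an appropriate coupling of environments), it suffices to prove transience for $G^{\mathrm{hier}}_N$. The point is that the cube-level quotient of this graph is a tree, and both the cosh interactions and the spanning-tree factor in \eqref{eq:u-marginal} can be organized along the dyadic decomposition, which paves the way for a scale-by-scale analysis of the environment.

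Third, one averages the environment inside each scale-$k$ cube to collapse $G^{\mathrm{hier}}_N$ to an effective non-homogeneous one-dimensional random walk indexed by scales $k = 0, 1, \dots, N$. At scale $k$ a cube contains $2^{kd}$ vertices and the long-range weights between two distinct scale-$k$ cubes are at least $\overline W\, k^\alpha 2^{-2kd}$ by \eqref{eq:cond1-w}; pairing the $2^{kd} \cdot 2^{kd}$ cross-cube vertex pairs gives a total ``conductance across scale $k$'' proportional to $\overline W k^\alpha$. Since $\alpha > 1$, the 1D effective resistance $\sum_k k^{-\alpha}$ is summable, hence the escape probability from $0$ to $\rho$ in $G^{\mathrm{hier}}_N$ stays bounded below uniformly in $N$, which yields the required uniform bound on the Green's function and hence transience.

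The main technical obstacle lies in Steps 2--3: controlling the random environment $u$ with the non-local spanning-tree factor in \eqref{eq:u-marginal} on a long-range graph. The spanning-tree sum couples all scales, so one needs a bound that extracts an essentially scale-local product of spanning-tree contributions, together with concentration estimates for $u_i - u_j$ across a scale-$k$ cube so that the cosh potentials do not wipe out the hierarchical conductances. The logarithmic enhancement $(\log_2 x)^\alpha$ in \eqref{eq:cond1-w} is precisely the minimal strength for which this scheme closes: without it, the factor $k^\alpha$ disappears and the effective 1D chain becomes recurrent.
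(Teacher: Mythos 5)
Your three-ingredient outline matches the paper's high-level strategy — finite boxes with wired boundary, a hierarchical lower bound via monotonicity in the weights, and a reduction to a one-dimensional effective chain with resistance $\sum_k k^{-\alpha}<\infty$ — but there is a genuine gap that you yourself flag at the end, and it is not a technicality: the proposal never actually controls the random environment $u$, and the mechanism you suggest for doing so is not the one that makes the argument close.

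Two concrete issues. First, the reduction from "expected number of visits" to "bound the escape probability" is not as clean as stated. The annealed Green's function is $\E^N\bigl[c(x)\,\rr^N(c,x\leftrightarrow\rho)\bigr]$, and neither factor is deterministic; you cannot treat this as the escape probability of a single effective network. The paper's Theorem \ref{le:random-conductances-trans} isolates the precise input needed: a mixed-moment bound $\sum_{f\ni x}\E^N[c_f/c_e]\le K_x/W_e$, which together with Thomson's principle for a unit flow in the \emph{deterministic} network $W$ (that is where the Bäumler-style flow construction of Lemma \ref{le:baeumler} enters) gives the Green's function bound. Your proposal conflates the deterministic transience statement with the annealed one and never makes this mixed-moment condition appear.

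Second, and more importantly, the mixed-moment bound requires uniform exponential moments $\E^{\Lambda_N}_{W,h}[e^{\pm m u_i}]\le C$ (Theorem \ref{thm:est-cosh-euclidean}), and this is exactly the "main technical obstacle" you name but do not resolve. The route you sketch — averaging $u$ inside each scale-$k$ cube, extracting a scale-local factorization of the spanning-tree determinant, and proving concentration of $u_i-u_j$ across scales — is not what the paper does and would be substantially harder. The paper's move is different and much simpler: after comparing to the hierarchical model, it reduces further to an effective model on a maximal antichain (using \cite[Cor.~2.3]{disertori-merkl-rolles2020}), then applies Poudevigne monotonicity a \emph{second} time to drop all edges except those along a single path to $\rho$ through the antichain. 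On that path the increments $u_{\pi_k}-u_{\pi_{k+1}}$ are \emph{independent} under the measure \eqref{eq:u-marginal} with a tree of weights (Lemma \ref{le:bound-log-cosh-u-i}), so the exponential moment factorizes into one-dimensional integrals and is bounded by $\exp\bigl(\mathrm{const}\cdot\sum_l W_{\pi_l\pi_{l+1}}^{-1}\bigr)$. The summability $\sum_l l^{-\alpha}<\infty$ enters here, in the exponent of the moment bound, not as the effective resistance of the averaged random conductance chain. Without this tree reduction you have no independence and no obvious way to handle the spanning-tree determinant; with it, no concentration estimate is needed at all. So the proposal captures the right scaffolding but is missing the one lemma that makes the environment bound tractable.
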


\paragraph*{Discussion}
We believe that a logarithmic correction in \eqref{eq:cond1-w} in dimensions $d=1,2$ is crucial
for this theorem to be valid and not only for its proof.
Indeed, for locally finite graphs, Poudevigne's Theorem~4 in \cite{poudevigne22} shows
that the VRJP with weights $W$ is recurrent whenever the
corresponding random walk in the \emph{deterministic} conductances $W$ is recurrent. 
In dimensions $d=1,2$, the random walk in conductances $W_{ij}\le \overline W\|i-j\|_\infty^{-2d}$ for some $\overline W>0$
is known to be recurrent, see \cite{caputo-faggionato-gaudilliere2009} and
\cite{baeumler-2022}. Although $\Z^d$
with long-range interactions is not a locally finite graph, one may
conjecture that Poudevigne's result could possibly be extended to this case. 
This would imply recurrence of the corresponding VRJP
for $W_{ij}\le \overline W\|i-j\|_\infty^{-2d}$.

\paragraph*{Strategy of proof}
The proof of the transience result Theorem \ref{thm:vrjp-transient-long-range}
relies on an analysis of random walks
  in random conductances given in Theorem \ref{le:random-conductances-trans} below. It requires the
  transience of the Markovian random walk on $\Z^d$ endowed
  with the  \emph{deterministic} weights $W_{ij}$ from Model~\ref{model:eucl}.
It is known that for   $W_{ij}\ge \overline W\|i-j\|_\infty^{-s}$, $i,j\in\Z^d$, with $\overline W>0$, $s<2d$,
the Markovian random walk on the weighted graph is transient. This was shown
by Caputo, Faggionato, and
Gaudilli\`ere \cite[Appendix B.1]{caputo-faggionato-gaudilliere2009}
using characteristic functions and by  Bäumler in \cite{baeumler-2022} using
electrical networks. In contrast to this, we have a logarithmic correction
$W_{ij}\ge \overline W(\log\|i-j\|_\infty)^\alpha\|i-j\|_\infty^{-2d}$ with $\alpha>1$
in Model~\ref{model:eucl}.
Bäumler's method still applies in this case and is used to
prove the following lemma.

\begin{lemma}[Transience of long-range Markovian random walk]
  \label{le:baeumler}
  \mbox{}\\
  Let $w:[1,\infty)\to (0,\infty)$ be a monotonically decreasing
  function satisfying \eqref{eq:cond1-w} and \eqref{eq:summability-long-range} for some $d\in\N$, $\overline W>0$, and $\alpha>1$.
  Then, the Markovian random walk on $\Z^d$ endowed with the long-range
  deterministic weights $W_{ij}=w(\|i-j\|_\infty)$ for all $i,j\in\Z^d$
is transient. 
\end{lemma}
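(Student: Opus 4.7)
The plan is to apply the standard electrical-network criterion for transience: a reversible random walk on a weighted graph is transient if and only if there exists a unit flow $\theta$ from the source to infinity with finite Dirichlet energy $\mathcal{E}(\theta)=\sum_{\{x,y\}\subset\Z^d}\theta(x,y)^2/W_{xy}$. Following B\"aumler, I would build such a flow hierarchically, using the long-range edges to transport mass across dyadic scales in a single hop.

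Concretely, let $B_k:=\{x\in\Z^d:\|x\|_\infty\le 2^k\}$ with $B_{-1}:=\emptyset$, and $A_k:=B_k\setminus B_{k-1}$. An elementary volume count gives $|A_k|\asymp 2^{kd}$ for $k\ge 1$. Define $\theta$ to vanish outside edges between consecutive annuli and to be constant between them:
$$
\theta(x,y)\;:=\;\frac{1}{|A_k|\,|A_{k+1}|}\qquad\text{for } x\in A_k,\ y\in A_{k+1},\ k\ge 0.
$$
Flow conservation is immediate: for $x\in A_k$ with $k\ge 1$, both the inflow $\sum_{z\in A_{k-1}}\theta(z,x)$ and the outflow $\sum_{y\in A_{k+1}}\theta(x,y)$ equal $1/|A_k|$, the net outflow at $0\in A_0=\{0\}$ is $1$, and the total flow across $\partial B_N$ equals $1$ for every $N$. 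Hence $\theta$ is a unit flow from $0$ to $\infty$.

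For the energy, any $x\in A_k$ and $y\in A_{k+1}$ satisfy $\|x-y\|_\infty\le 2^{k+2}$, so the monotonicity of $w$ together with \eqref{eq:cond1-w} yields $W_{xy}\ge w(2^{k+2})\ge \overline W(k+2)^\alpha/2^{2d(k+2)}$. Since there are $|A_k|\,|A_{k+1}|$ such edges, each carrying squared flow $(|A_k|\,|A_{k+1}|)^{-2}$, the level-$k$ contribution to $\mathcal{E}(\theta)$ is bounded by
$$
\frac{1}{|A_k|\,|A_{k+1}|}\cdot\frac{2^{2d(k+2)}}{\overline W\,(k+2)^\alpha}\;\lesssim\;\frac{1}{\overline W\,(k+2)^\alpha},
$$
using $|A_k|\,|A_{k+1}|\asymp 2^{(2k+1)d}$. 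Summing over $k\ge 0$ converges \emph{precisely} because $\alpha>1$, which pinpoints the role of the logarithmic correction in \eqref{eq:cond1-w}. The main obstacle is not conceptual but bookkeeping: one must verify $|A_k|\asymp 2^{kd}$ with constants uniform in $k\ge 1$ and handle the first few levels, where the lower bound in \eqref{eq:cond1-w} is vacuous at small argument (which is harmless by the monotonicity of $w$). The summability hypothesis \eqref{eq:summability-long-range} is not needed for this energy estimate; it serves only to ensure that the Markov chain is well-defined in the first place.
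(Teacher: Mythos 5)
Your proposal is correct and follows essentially the same route as the paper: both use the unit-flow/finite-energy criterion, build the flow by spreading mass uniformly between consecutive dyadic annuli, bound $W_{xy}$ from below via $\|x-y\|_\infty\le 2^{k+2}$ and monotonicity of $w$, and conclude from $\sum_k (k+2)^{-\alpha}<\infty$ when $\alpha>1$. The only cosmetic difference is your choice of symmetric annuli $\{2^{k-1}<\|x\|_\infty\le 2^k\}$ versus the paper's $(-2^k,2^k]^d\setminus(-2^{k-1},2^{k-1}]^d$, which changes nothing in the estimates.
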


To study the VRJP on $\Z^d$ with weights from Model~\ref{model:eucl}, we analyze the process
on a sequence of finite boxes $\Lambda_{N}\subset \Z^{d}$  
with wired boundary conditions, given by the pinning
  \begin{align}
    \label{eq:pinning-wired-bc}
    0\leq h_i=\sum_{j\in\Z^d\setminus\Lambda_N}W_{ij}<\infty,\quad i\in\Lambda_N. 
  \end{align}
  Note that the last formula is a special case of
  formula~\eqref{eq:def-pinning-general}. 
The following bounds on the environment are the key ingredient of our
transience proof. 
They may be interpreted as long-range order in any dimension $d\ge 1$, provided
that the long-range weights do not decay too fast.

\begin{theorem}[Bounds on the environment -- long-range case]
  \label{thm:est-cosh-euclidean}
  For $N\in\N$, we consider the box $\Lambda_N:=\{0,1,\ldots,2^N-1\}^d\subseteq\Z^d$,
  $d\ge 1$, 
with the weights inherited from Model~\ref{model:eucl} with
parameters $\overline W>0$ and $\alpha>1$.
Then, for all $m\ge 1$ and $\sigma\in\{\pm 1\}$, the bound 
  \begin{align}
    \label{eq:eucl-est-cosh-ui}
    \E^{\Lambda_N}_{W,h}\left[e^{\sigma m u_i}\right]\le\cdrei
  \end{align}
holds with some constant 
$\cdrei=\cdrei(\overline W,d,\alpha,m)>0$, 
  uniformly in $i\in\Lambda_N$ and $N$.
\end{theorem}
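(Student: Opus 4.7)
The plan is to control $\E^{\Lambda_N}_{W,h}[e^{\sigma m u_i}]$ by comparing with a hierarchical model which has the same long-range structure at dyadic scales but is easier to analyze thanks to its tree structure.

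First, I would fix the vertex $i\in\Lambda_N$ and introduce a dyadic filtration: for $k=0,\ldots,N$, let $B_k(i)$ denote the dyadic box of side $2^k$ containing $i$. By the lower bound \eqref{eq:cond1-w}, at scale $k$ the edge weights satisfy $W_{ij}\ge \overline W k^\alpha 2^{-2dk}$ for vertices $i,j$ at sup-norm distance of order $2^k$. Hence the total interaction strength between $B_k(i)$ and $B_{k+1}(i)\setminus B_k(i)$ is at least of order $k^\alpha$, and the key point of the exponent $\alpha>1$ is that the sequence $(k^{-\alpha})_{k\ge 1}$ is summable.

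Second, I would compare $\nu^{\Lambda_N}_{W,h}$ with a hierarchical measure whose couplings depend only on the dyadic scale separating the endpoints. Since smaller weights typically yield larger fluctuations of $u$, replacing the Euclidean weights by suitably chosen hierarchical ones, not exceeding $W$ up to constants, should produce an upper bound on $\E[e^{\sigma m u_i}]$. Making this comparison precise is delicate because the density \eqref{eq:u-marginal} contains the spanning-tree factor $\sqrt{\sum_{T\in\mathcal{S}}\prod W_{ij}e^{u_i+u_j}}$, which is not obviously monotone in the $W_{ij}$'s. I would try to push the comparison through using either the Sabot--Tarr\`es--Zeng integral representation of the tree factor or a Poudevigne-style monotonicity, exploiting the fact that the target functional $e^{\sigma m u_i}$ is a single-site observable.

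Third, on the hierarchical model I would iteratively integrate out intra-block fluctuations, scale by scale. This should reduce the problem to a non-homogeneous effective one-dimensional chain on block-average variables $\bar u_k$, $k=0,\ldots,N$, with inter-scale interactions of the form $\tilde W_k(\cosh(\bar u_{k+1}-\bar u_k)-1)$ and coefficients $\tilde W_k$ of order at least $k^\alpha$, while the pinning $h$ from \eqref{eq:pinning-wired-bc} anchors $\bar u_N$ near $0$. Step-by-step moment bounds along the chain, together with summability of $\sum_k k^{-\alpha}$ for $\alpha>1$, should then yield the desired uniform estimate \eqref{eq:eucl-est-cosh-ui}.

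The main obstacle, as I see it, is the Euclidean-to-hierarchical comparison: unlike in a purely Gaussian setting, neither the Hamiltonian nor the square-root spanning-tree factor is monotone in the weights in isolation, so controlling their combined dependence under a change of weights is the technical heart of the argument. Once the comparison is in place, the hierarchical analysis reduces to a one-dimensional computation whose convergence is dictated precisely by the condition $\alpha>1$, and the summability \eqref{eq:summability-long-range} together with the monotonicity of $w$ guarantees that the pinning $h_i$ remains controlled uniformly in $i\in\Lambda_N$ and $N$.
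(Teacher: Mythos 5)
Your proposal correctly identifies the overall strategy — compare to a hierarchical model, reduce the hierarchical model to an effective one-dimensional chain, and exploit summability of $\sum_k k^{-\alpha}$ for $\alpha>1$ — and you correctly single out Poudevigne's monotonicity as the tool that resolves the non-monotonicity of the spanning-tree factor, which the paper implements via a digit-interleaving bijection $\phi:\Lambda_N\to\{0,1\}^{Nd}$, a comparison of $\|\cdot\|_\infty$ with the hierarchical distance (Lemma~\ref{le:comparison-dH-eucl}), and Fact~\ref{fact:poudevigne}. However, the mechanism you propose for reducing the hierarchical model to a one-dimensional chain — iteratively integrating out intra-block fluctuations to produce block-average variables $\bar u_k$ — is not the paper's route and would face genuine difficulties: the $u$-marginal of the $H^{2|2}$ model does not close under block integration, and the spanning-tree determinant does not factorize under such coarse-graining, so it is unclear that an effective chain with interactions $\tilde W_k(\cosh(\bar u_{k+1}-\bar u_k)-1)$ would actually emerge. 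The paper instead (a) invokes an exact antichain reduction from [DMR22, Cor.\ 2.3], which identifies the law of $e^{u_i}$ in the hierarchical model with its law in an effective model on a complete graph over a maximal antichain (built from specific representative vertices, not block averages), and then (b) applies Poudevigne's monotonicity a \emph{second} time (Lemma~\ref{le:bound-log-cosh-u-i}): it discards all edges except those of a spanning tree containing a self-avoiding path $\pi=(i_1,\ldots,i_{N+1}=\rho)$, which makes the increments $u_{\pi_k}-u_{\pi_{k+1}}$ independent with explicit one-dimensional densities; each factor $\E[e^{\sigma m(u_{\pi_k}-u_{\pi_{k+1}})}]$ is then bounded by $\exp\big((2m+1)^2/(8W_{\pi_k\pi_{k+1}})\big)$, and the bound $W_{\pi_k\pi_{k+1}}\gtrsim k^\alpha$ gives the summable estimate you anticipate. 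This second, decoupling application of monotonicity is the key step missing from your plan; without it there is no independence along the chain, and "step-by-step moment bounds" would require controlling conditional laws that are not explicit.
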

An explicit expression for $\cdrei$ is given in formula \eqref{eq:def-cdrei}.

\subsection{Results on the VRJP with possibly long-range weights in dimension $d\ge 3$}
In this subsection, we consider the following model.

\begin{model}\textbf{\emph{($d$-dimensional lattice with large interactions in high dimension $d\ge 3$)}}
  \label{model:high-d}
Fix a dimension $d\ge 3$ and $\overline W>0$.
We consider the $d$-dimensional lattice $\Z^d$ with possibly long-range edge weights 
$W_{ij}=W_{ji}\geq 0$ for $i,j\in\Z^d$ with $i\neq j$,
which are lower bounded by weights of simple nearest-neighbor
random walk on $\Z^d$
\begin{align}
W_{ij}\geq \overline W 1_{\{ \|i-j\|_{2}=1\}}
\end{align}
and fulfill the uniform summability condition
\begin{equation}\label{eq:cond2-w}
\sup_{i\in \Z^{d}} \sum_{j\in \Z^{d}\setminus \{i \}} W_{ij}<\infty .
\end{equation}
\end{model}

\begin{theorem}[Transience of VRJP in high dimension]
  \label{thm:vrjp-transient-high-d}
  Consider Model~\ref{model:high-d} with parameters $d\ge 3$ and $\overline W$.
  There is a constant $\weins=\weins(d)>0$ such that the assumption
  $\overline W\ge\weins$ implies that the 
discrete time process associated to the VRJP starting in $0$ for this model 
is transient, i.e.\ it visits a.s.\ any given vertex only finitely often.
Slightly stronger, the expected number of visits to any given vertex $x$ is finite.
Moreover, if the weights $W_{ij}$ are translation invariant
this expectation is uniformly bounded in $x$.
One can take $\weins = \max\{\wzwei,10^8\}$ with $\wzwei$ as in Theorem
\ref{th:est-cosh-euclidean-highdim}.
\end{theorem}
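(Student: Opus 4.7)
The plan is to mirror the strategy used for Theorem~\ref{thm:vrjp-transient-long-range} and reduce the statement to the general criterion on random walks in random conductances, Theorem~\ref{le:random-conductances-trans}. Recall that, up to its first exit from any finite set, the discrete-time process associated to VRJP is a random walk in the random conductances $c_e=W_{ij}e^{u_i+u_j}$ distributed according to the wired finite-volume measure $\nu^{\Lambda_N}_{W,h}$ defined in \eqref{eq:u-marginal}. To apply Theorem~\ref{le:random-conductances-trans}, two ingredients must be supplied: (i) transience of the \emph{deterministic} Markovian random walk on $\Z^d$ with conductances $W_{ij}$, and (ii) uniform exponential-moment estimates $\E^{\Lambda_N}_{W,h}[e^{\sigma m u_i}]\le C$, $\sigma\in\{\pm 1\}$, $m\ge 1$, independent of $i\in\Lambda_N$ and of $N$.

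For (i), the lower bound $W_{ij}\ge \overline W\, 1_{\{\|i-j\|_2=1\}}$ together with Rayleigh monotonicity implies that the electrical network $(\Z^d,W)$ dominates a positive multiple of the standard nearest-neighbor network on $\Z^d$. Since simple random walk on $\Z^d$ is transient for $d\ge 3$, the Markovian walk driven by the conductances $W_{ij}$ is transient as well, with a finite Green's function uniformly controlled by that of simple random walk. This replaces the role of Lemma~\ref{le:baeumler} in the long-range context, and does not require any largeness of $\overline W$ beyond positivity.

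For (ii), the required uniform moment bounds are precisely the content of Theorem~\ref{th:est-cosh-euclidean-highdim}, and they are available under the assumption $\overline W\ge \wzwei$. The additional numerical threshold $10^8$ in the definition $\weins=\max\{\wzwei,10^8\}$ enters when these moment bounds are fed into Theorem~\ref{le:random-conductances-trans}: a quantitative comparison between the random fluctuations of $c_e$ and the Green's function of the background network must be small, and this forces a further explicit lower bound on $\overline W$. With both ingredients in hand, Theorem~\ref{le:random-conductances-trans} yields finiteness of the expected number of visits to any fixed $x\in\Z^d$, and hence a.s.\ transience. When the weights $W_{ij}$ are translation invariant, one applies the estimate at a translate of $x$ inside $\Lambda_N$ and uses the uniformity in $i$ of (ii) to obtain a bound that is uniform in $x$.

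The main obstacle is that the graph in Model~\ref{model:high-d} is not locally finite, so the whole argument has to be carried out on the wired finite-volume approximations $\Lambda_N\cup\{\rho\}$ with pinning \eqref{eq:pinning-wired-bc}, making sure that all constants in (i) and (ii) are uniform in $N$, and then passing to the limit $N\to\infty$. The summability assumption \eqref{eq:cond2-w} is what makes the pinnings $h_i$ finite and guarantees that the VRJP on $\Z^d$ is well defined and jumps only finitely often up to any finite time, so that this limiting procedure indeed captures the infinite-volume process.
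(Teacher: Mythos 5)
Your overall strategy matches the paper's: reduce to the transience criterion of Theorem~\ref{le:random-conductances-trans}, verify Assumption~2 by Rayleigh monotonicity against nearest-neighbor simple random walk in $d\ge 3$ (no size condition on $\overline W$ needed there), and supply the moment bounds from Theorem~\ref{th:est-cosh-euclidean-highdim} for Assumption~3. The structure and the role of \eqref{eq:cond2-w} (finiteness of the pinnings $h_i$, Assumption~1, and finiteness of $K_x:=\text{const}\cdot\sum_k W_{xk}$) are correctly identified.

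However, your explanation of where the threshold $10^8$ comes from is not right. You attribute it to ``a quantitative comparison between the random fluctuations of $c_e$ and the Green's function of the background network,'' but the Green's function plays no quantitative role in fixing $\overline W$; its finiteness already follows from Rayleigh monotonicity with no size assumption. The actual mechanism is purely a H\"older-exponent count in the verification of Assumption~3. Since the VRJP is started at $0$ rather than at $\rho$, the relevant expectation is $\E^{\tilde\Lambda_N}_{W,h}\!\left[\tfrac{c_f}{c_e}e^{u_0}\right]$, which for $e=\{i,j\}$ and $f=\{x,k\}$ is an expectation of $e^{u_x+u_k-u_i-u_j+u_0}$ — five exponential factors. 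The five-fold H\"older inequality then requires uniform control of $\E^{\tilde\Lambda_N}_{W,h}[e^{\pm 5 u_\cdot}]$, i.e.\ the moment bound of Theorem~\ref{th:est-cosh-euclidean-highdim} with $m=5$. That theorem is stated for $m\in[1,\tfrac12\overline W^{1/8}]$, so one needs $\tfrac12\overline W^{1/8}\ge 5$, i.e.\ $\overline W\ge 10^8$. This is the concrete origin of $\weins=\max\{\wzwei,10^8\}$, and it should be spelled out; as written, your sketch would leave the reader unable to recover the constant.

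Two smaller points. First, in Model~\ref{model:high-d} the weights may vanish, so one should restrict the edge set to $\{i,j\}$ with $W_{ij}>0$ before applying the criterion; otherwise the conductances $c_e=W_{ij}e^{u_i+u_j}$ are not all positive as required. Second, uniformity in $x$ in the translation-invariant case is not obtained by ``applying the estimate at a translate of $x$''; rather, the moment bound of Theorem~\ref{th:est-cosh-euclidean-highdim} is already uniform in $i\in\Lambda_N$, and translation invariance of $W$ makes both $K_x$ and $\rr(W,x\leftrightarrow\infty)$ independent of $x$, which is what gives the uniform bound on the expected number of visits.
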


Note that on locally finite graphs  the probability that the VRJP is transient is an increasing function
of the weights $W_{ij}$ (cf.\ \cite[Theorem~1]{poudevigne22}).
If this result was applicable also for non locally finite graphs, then 
transience of VRJP as stated in Theorem~\ref{thm:vrjp-transient-high-d} 
would follow from  the transience result on  $\Z^d$ for $d\ge 3$ with nearest-neighbor jumps
proved in \cite{sabot-tarres2012}. However, one may conjecture that this obstruction is only
a technical issue, not a fundamental failure of the approach.

For $\Z^d$ in dimension $d\ge 3$, we deduce the following bounds
for the environment. As in the long-range case, these bounds are the key ingredient
to prove transience.
  
\begin{theorem}[Bounds on the environment -- high dimension]\label{th:est-cosh-euclidean-highdim}
We consider any  finite box  $\Lambda_N\subset\Z^d$  with the weights inherited from Model
\ref{model:high-d}. There exists $\wzwei\geq 2^8$ such that for all
$\overline W\geq \wzwei$, all $m\in [1,\frac12\overline W^{\frac{1}{8}}]$,
and $\sigma\in\{\pm 1\}$, one has
\begin{align}
    \label{eq:eucl-est-cosh-ui2}
  \E^{\Lambda_N}_{W,h}[e^{\sigma m u_i}]\le 2^{2m+1}
  \end{align}
  uniformly in $i\in\Lambda_N$ and the box $\Lambda_{N}$. 
\end{theorem}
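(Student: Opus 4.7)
My plan is to exploit the large nearest-neighbor lower bound $W_{ij}\ge \overline W \mathbf 1_{\{\|i-j\|_2=1\}}$ together with transience of simple random walk on $\Z^d$ for $d\ge 3$, in order to obtain an $\htwo$-type Ward-identity bound on the exponential moments of $u_i$, uniformly in $N$ and $i\in\Lambda_N$.

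\textbf{Step 1 (Reduction).} Since $e^{\sigma m u_i}\le 2\cosh(m u_i)$, it suffices to prove $\E^{\Lambda_N}_{W,h}[\cosh(m u_i)]\le 2^{2m}$. I would then try to show that enlarging the weights and pinnings only helps: using non-negativity of the extra long-range cosh-interactions and of the pinnings $h_i$, a domination argument should let us dominate the general model by the pure nearest-neighbor model with weight $\overline W$ and whatever residual pinning comes from the boundary. The spanning-tree factor $\sqrt{\sum_T\prod W_{ij}e^{u_i+u_j}}$ in \eqref{eq:u-marginal} must be handled carefully, either by bounding the ratio of spanning-tree partition functions between the two models or, more robustly, by retaining all the extra weights throughout the subsequent estimate and checking that they contribute non-negatively.

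\textbf{Step 2 (Moment bound via Ward identity).} For the reduced nearest-neighbor model in $d\ge 3$, I would invoke the Ward identities for the $\htwo$-supersymmetric hyperbolic sigma model (in the spirit of Disertori--Spencer--Zirnbauer and Disertori--Sabot--Tarr\`es) to get a bound of the form
\begin{align*}
\E^{\Lambda_N}_{W,h}[\cosh(m u_i)-1]\le \tfrac{m^2}{2}\,\E^{\Lambda_N}_{W,h}[\mathcal G_u(i,i)]+O(m^4),
\end{align*}
where $\mathcal G_u$ is the Green's function (killed at the wiring point $\rho$) of the random-conductance walk with $c_e=W_{ij}e^{u_i+u_j}$. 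A comparison with simple random walk on $\Z^d$ whose nearest-neighbor Green's function is $O(1/\overline W)$ in $d\ge 3$, combined with a bootstrap feeding a priori exponential-moment bounds on $u$ back into the random-conductance walk, gives $\E[\mathcal G_u(i,i)]=O(1/\overline W)$ uniformly in $N$ and $i\in\Lambda_N$.

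\textbf{Step 3 (Conclusion).} Combining, $\E^{\Lambda_N}_{W,h}[\cosh(m u_i)]\le 1+C(d)m^2/\overline W$. For $m\le\tfrac12\overline W^{1/8}$ this is at most $\exp(C(d)\overline W^{-3/4}/4)$, which is comfortably $\le 2^{2m}$ as soon as $\overline W$ exceeds a dimension-dependent threshold $\wzwei=\wzwei(d)\ge 2^8$; including the factor $2$ from $e^{\sigma m u_i}\le 2\cosh(m u_i)$ yields the claimed $2^{2m+1}$.

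\textbf{Main obstacle.} The delicate point is the bootstrap in Step~2: the Ward-identity Green's-function bound involves $\E[e^{u_i+u_j}]$, which is of the very form we are trying to estimate. This has to be closed either by an induction in the exponent $m$ (starting from a crude bound at $m=1$ and incrementing $m$ up to $\tfrac12\overline W^{1/8}$) or by a simultaneous closed-system argument on a whole family of moments. A second subtlety is making the reduction of Step~1 precise while controlling the spanning-tree factor uniformly in $N$; naive monotonicity does not directly apply to densities containing a sum over spanning trees, so the extra weights may need to be carried through Step~2 rather than eliminated first.
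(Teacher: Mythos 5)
Your proposal shares the high-level architecture with the paper (compare to the nearest-neighbor model with weight $\overline W$, use SUSY/Ward-identity input, exploit transience in $d\ge3$), but there are two genuine gaps that would keep it from closing.

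First, the reduction in Step~1 is not a hand-waving issue to be swept aside: the correct statement is Poudevigne's monotonicity theorem \cite[Theorem~6]{poudevigne22} (quoted in the paper as Fact~\ref{fact:poudevigne}), which says that $\E^\Lambda_{W,h}[f(e^{u_i})]$ is \emph{decreasing} in $(W,h)$ componentwise for convex $f$. This handles the spanning-tree factor you worry about as a black box and gives you exactly the domination $\E^{\Lambda_N}_{W,h}[e^{\sigma m u_i}]\le\E^{\Lambda_N}_{W^{\rm nn},h^{\rm nn}}[e^{\sigma m u_i}]$. Without citing this result you have no rigorous Step~1; with it, your residual caveat disappears.

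Second, and more seriously, your Step~2 is proposing to rederive from scratch what is already \cite[Theorem~1]{disertori-spencer-zirnbauer2010}. The claim $\E[\cosh(mu_i)-1]\le\frac{m^2}{2}\E[\mathcal G_u(i,i)]+O(m^4)$ followed by a "bootstrap" to control $\E[\mathcal G_u(i,i)]$ is precisely the self-referential loop that the DSZ multi-scale induction closes, and there is no reason to expect your naive bootstrap or induction in $m$ to close on its own; the DSZ theorem is the deep input, and the correct move is to invoke it to get $\E^{\Lambda_N}_{W^{\rm nn},h^{\rm nn}}[(\cosh(u_i-u_j))^{2m}]\le 2$ directly for $2m\le\overline W^{1/8}$. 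Moreover, your plan tries to bound $\E[\cosh(mu_i)]$ in one shot, which is awkward because the pinning vanishes in the bulk. The paper instead splits by Cauchy--Schwarz through a boundary vertex $j\in\partial\Lambda_N$,
\begin{align*}
\E^{\Lambda_N}_{W^{\rm nn},h^{\rm nn}}[e^{\sigma m u_i}]\le\E^{\Lambda_N}_{W^{\rm nn},h^{\rm nn}}[e^{2\sigma m(u_i-u_j)}]^{1/2}\,\E^{\Lambda_N}_{W^{\rm nn},h^{\rm nn}}[e^{2\sigma m u_j}]^{1/2},
\end{align*}
bounding the first factor via the DSZ theorem and the second via a separate, much softer Ward-identity localization bound (Lemma~\ref{le:prod-equals-1}, using $W_{j\rho}\ge\overline W$ for boundary $j$). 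This split is what lets two different SUSY inputs combine; without it you are stuck carrying the pinning through your Green's-function estimate. Your Step~3 is fine arithmetic conditional on a clean moment bound, but the preceding steps do not deliver one.
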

\noindent Note that in the case of a strict inequality $\overline W>\wzwei$,
we can take $m>1$,
which yields uniform integrability of $e^{u_i}$. This is formulated
in the following Lemma \ref{le:uniform-integrability}.

\subsection{Uniform integrability}

A potential strategy to prove Theorems~\ref{thm:vrjp-transient-long-range} and
\ref{thm:vrjp-transient-high-d}
uses a recurrence/transience criterion given in
\cite[Theorem~1]{sabot-zeng15} for locally finite graphs. Take a
sequence of finite sets $\Lambda_N\uparrow\Z^d$ with $0\in\Lambda_N$.
One can couple $u_0^{(N)}$ distributed according to
  $\nu^{\Lambda_N}_{W,h}$ for all $N$ in such a way that $(e^{u_0^{(N)}})_{N\in\N}$ is a
  non-negative martingale, which is uniformly integrable by the following
  lemma.

\begin{lemma}[Uniform integrability]
  \label{le:uniform-integrability}
  In Model~\ref{model:eucl}, let $\overline W>0$.
    In Model~\ref{model:high-d}, let $\overline W>\wzwei$ with $\wzwei$ from Theorem \ref{th:est-cosh-euclidean-highdim}.
  Under these assumptions, the random variables
  $e^{\pm u_i}$ distributed according to $\nu^{\Lambda_N}_{W,h}$
  are uniformly integrable in $N\in\N$ and $i\in\Lambda_N$.
\end{lemma}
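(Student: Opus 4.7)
The plan is to deduce the lemma from the standard sufficient condition for uniform integrability: a family $(X_\alpha)_{\alpha\in A}$ of non-negative random variables is uniformly integrable as soon as there exists $p>1$ with $\sup_{\alpha\in A}\E[X_\alpha^p]<\infty$, because for every $K>0$ one has the Markov-type bound
\begin{align*}
  \E[X_\alpha 1_{\{X_\alpha>K\}}]\le K^{1-p}\,\E[X_\alpha^p],
\end{align*}
which tends to $0$ as $K\to\infty$ uniformly in $\alpha$. Applied to the non-negative random variables $X_{N,i,\sigma}=e^{\sigma u_i}$ indexed by $N\in\N$, $i\in\Lambda_N$, and $\sigma\in\{\pm1\}$, it suffices to find some exponent $m>1$ such that
\begin{align*}
  \sup_{N\in\N,\,i\in\Lambda_N,\,\sigma\in\{\pm1\}}\E^{\Lambda_N}_{W,h}\bigl[e^{\sigma m u_i}\bigr]<\infty,
\end{align*}
because $(e^{\sigma u_i})^m=e^{\sigma m u_i}$. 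Thus the entire argument reduces to invoking the environment bounds from the previous two subsections at an exponent strictly greater than $1$.

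In Model~\ref{model:eucl} with $\overline W>0$, one simply applies Theorem~\ref{thm:est-cosh-euclidean} with the choice $m=2$ (any fixed value above $1$ works) and both signs $\sigma\in\{\pm1\}$. This yields the uniform $L^2$-bound $\E^{\Lambda_N}_{W,h}[e^{2\sigma u_i}]\le C(\overline W,d,\alpha,2)$, and the criterion above concludes the argument in this case.

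In Model~\ref{model:high-d} with $\overline W>\wzwei$, I would exploit the strict inequality together with the fact $\wzwei\ge 2^8$ from Theorem~\ref{th:est-cosh-euclidean-highdim}: this gives $\overline W^{1/8}>2$, hence $\tfrac12\overline W^{1/8}>1$, so there exists $m\in(1,\tfrac12\overline W^{1/8}]$. For this $m$, Theorem~\ref{th:est-cosh-euclidean-highdim} supplies the uniform bound $\E^{\Lambda_N}_{W,h}[e^{\sigma m u_i}]\le 2^{2m+1}$ for both $\sigma\in\{\pm1\}$ and all $N,i$, which is precisely the required uniform $L^m$-bound with $m>1$. No genuine obstacle arises: the lemma is a packaging statement converting the moment estimates of the previous theorems into uniform integrability via a textbook de~la~Vallée~Poussin-type criterion. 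The only delicate point is that in Model~\ref{model:high-d} the strict inequality $\overline W>\wzwei$ is needed (not merely $\overline W\ge\wzwei$) in order to secure an admissible exponent $m$ strictly above $1$, and this strict inequality is exactly what the lemma assumes.
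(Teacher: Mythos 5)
Your reduction of uniform integrability to a uniform $L^m$-moment bound with some $m>1$ is exactly the right strategy and is the first step of the paper's own proof; likewise, your observation that in Model~\ref{model:high-d} the strict inequality $\overline W>\wzwei\ge 2^8$ is precisely what secures an admissible exponent $m\in(1,\tfrac12\overline W^{1/8}]$ matches the paper.

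However, there is a genuine gap in the way you invoke Theorems~\ref{thm:est-cosh-euclidean} and \ref{th:est-cosh-euclidean-highdim}. In the lemma, $\Lambda_N$ refers to an \emph{arbitrary} increasing sequence of finite subsets $\Lambda_N\uparrow\Z^d$ with $0\in\Lambda_N$ (this is the setting laid out in the sentence preceding the lemma, where the martingale $(e^{u_0^{(N)}})_N$ is introduced). But Theorem~\ref{thm:est-cosh-euclidean} is proved only for dyadic boxes $\{0,\ldots,2^N-1\}^d$, and Theorem~\ref{th:est-cosh-euclidean-highdim} is stated only for finite \emph{boxes}. Applying these theorems ``simply'' to a general finite set $\Lambda_N$ is not justified. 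The paper closes this gap by a separate step: given an arbitrary finite set $\Lambda'\subset\Z^d$, choose a dyadic box $\Lambda_N\supset\Lambda'$, endow both with wired boundary conditions, and invoke the one-step martingale coupling of $e^{u_i^{(\Lambda')}}$ and $e^{u_i^{(\Lambda_N)}}$ from \cite[Theorem~1]{sabot-zeng15}. Since $x\mapsto x^{\sigma m}$ is convex for $m>1$, Jensen's inequality then transfers the moment bound from the dyadic box to $\Lambda'$: $\E^{\Lambda'}_{W,h}[e^{\sigma m u_i}]\le\E^{\Lambda_N}_{W,h}[e^{\sigma m u_i}]$. Your proof is missing this martingale/Jensen bridge, without which the uniform moment bound has only been established on dyadic (respectively rectangular) boxes and not on the full class of finite volumes the lemma addresses.
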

  Hence, the martingale $(e^{u_0^{(N)}})_{N\in\N}$ converges a.s.\ and in $L^1$
  to a limit $\psi_0$. The limit satisfies 
  $\E[\psi_0]=\lim_{N\to\infty}\E^{\Lambda_N}_{W,h}[e^{u_0^{(N)}}]=1$, 
  where the last equation follows by supersymmetry, see
  \cite[(B.3)]{disertori-spencer-zirnbauer2010}.
  Because of uniform integrability of $e^{-u_0^{(N)}}$, $N\in\N$,
  the expectation $\E[\psi_0^{-1}]$ is finite and consequently
  the limit $\psi_0$ is almost surely strictly positive. If one extended
  \cite[Theorem~1]{sabot-zeng15} to $\Z^d$ with long-range interactions,
  this would yield transience for the VRJP in an alternative way to the approach
  in this paper.


  \paragraph*{How this paper is organized}
In Section \ref{sec:prooftrans}  we show how transience of the VRJP follows from the bounds on the
random environment given in Theorems \ref{thm:est-cosh-euclidean} and
\ref{th:est-cosh-euclidean-highdim}. Uniform integrability from
Lemma~\ref{le:uniform-integrability} is shown in Section
\ref{subsec:uniform}.
In Section  \ref{se:boundslongrange} we analyse an auxiliary model with hierarchical
interactions. The corresponding bounds are obtained 
by reducing the model to an effective one with vertices that represent length scales.
  Although this effective model is still defined on a complete graph, only interactions between
  neighboring length scales are relevant, which allows us to treat it essentially like
  an inhomogeneous one-dimensional model.
In Section \ref{se:long-range-weights}, we show how
bounds on the corresponding environment can be transferred to bounds on the environment for 
the Euclidean lattice in any dimension $d\ge 1$ with long-range interactions.
In Section~\ref{subsec:H22} we introduce the $H^{2|2}$ model, which describes the VRJP random
environment as a marginal. Section \ref{se:ward-identity} collects
some identities generated by supersymmetry. These are applied to deduce a bound
on fluctuations of the random environment.
Finally, in Section  \ref{se:boundshighd}, we prove the bounds on the environment
in dimension $d\geq 3$ from Theorem \ref{th:est-cosh-euclidean-highdim}
by combining the bound from Section \ref{se:ward-identity}  
with estimates from
\cite{disertori-spencer-zirnbauer2010} and a monotonicity result from \cite{poudevigne22}. 

The constants $\weins,\wzwei$, and $\cdrei$ keep their meaning throughout the whole
paper.

\section{Proof of the main results}
\subsection{Transience proof given bounds on the environment}
\label{sec:prooftrans}

Our transience proof uses the connection between random walks and electrical
networks, cf.\ \cite{lyons-peres2016} for background. First
we prove transience for Markovian random walks with long-range jumps. 

\begin{proof}[Proof of Lemma~\ref{le:baeumler}]
  By \cite[Theorem~2.11]{lyons-peres2016}, 
  to prove transience, it suffices to construct a unit flow of finite
  energy from $0$ to infinity.
  Following the same strategy as in the proof of 
  \cite[Theorem~1.1]{baeumler-2022},
  we consider the pairwise disjoint annuli
  $B_k=((-2^k,2^k]^d\setminus(-2^{k-1},2^{k-1}]^d)\cap\Z^d$
  for $k\in\N$ and $B_0=\{0\}^d$.  We observe 
  $|B_k|\ge 2^{kd}$ for $k\in\N_0$. Note that for
  $i\in B_k$, $j\in B_{k+1}$, we have $\|i-j\|_\infty\le 2^{k+2}$ and since
  $w$ is decreasing and satisfies the lower bound \eqref{eq:cond1-w},
  it holds 
  \begin{align}
    W_{ij}=w(\|i-j\|_\infty)\ge w(2^{k+2})
    \ge \overline W \frac{(k+2)^\alpha}{2^{2d(k+2)}}
    \ge \overline W 2^{-3d}\frac{(k+2)^\alpha}{|B_k||B_{k+1}|}
    .
  \end{align}
  Given $i,j\in\Z^d$, we define
  $\theta(i,j)=-\theta(j,i)=|B_k|^{-1}|B_{k+1}|^{-1}$ if $i\in B_k$, $j\in B_{k+1}$ for some
  $k\in\N_0$, and $\theta(i,j)=0$ otherwise. Then, $\theta$ is a unit flow
  from $0$ to infinity, i.e., Kirchhoff's node rule
  $\sum_{j\in\Z^d}\theta(i,j)=\delta_{0i}$ holds for all
  $i\in\Z^d$. The corresponding energy of the network with conductances
  $W$ is bounded as follows
  \begin{align}
    \frac12\sum_{i,j\in\Z^d}\frac{\theta(i,j)^2}{W_{ij}}
    =\sum_{k\in\N_0}\sum_{i\in B_k}\sum_{j\in B_{k+1}}\frac{|B_k|^{-2}|B_{k+1}|^{-2}}{W_{ij}}
    \le\frac{2^{3d}}{\overline W}\sum_{k\in\N_0}
    (k+2)^{-\alpha}<\infty
  \end{align}
  because $\alpha>1$.
\end{proof}

To prove transience for VRJP, we apply the following criterion
for random walks in random conductances. 
The idea for its proof is due to Christophe Sabot
and Pierre Tarr\`es (proof of \cite[Cor.\ 4]{sabot-tarres2012} and 
private communication).

\begin{theorem}[Transience criterion]
  \label{le:random-conductances-trans}
  Consider an undirected connected graph $G=(\Lambda,E_\Lambda)$, not
  necessarily of finite degree, a vertex $0\in\Lambda$,
  and an increasing sequence
  $\Lambda_N\uparrow\Lambda$ of finite vertex sets containing $0$. Let $G_N=(\Lambda_N\cup\{\rho\}, E_N)$,
  where $\rho$ is an additional wiring point and $G_N$ is obtained from $G$ as follows.
  We restrict the vertex set to $\Lambda_N$, add $\rho$, and use wired boundary conditions 
  to define the edge set $E_N$ as in \eqref{eq:edge-set-wired-bc}. 
  
  Consider a stochastic process $X=(X_n)_{n\in\N_0}$ on $G=(\Lambda,E_\Lambda)$ starting
  in $0$ and taking
  only nearest-neighbor jumps. We assume that for each $N$, the law of the process before
  it hits
  $\Lambda\setminus\Lambda_N$ equals the annealed law of a random walk in some random conductances
  $c_e=c_e^{(N)}>0$, $e\in E_N$,
  on $G_N$ before it hits $\rho$. The law of $(c_e^{(N)})_{e\in E_N}$ may depend on $N$.
  We denote the expectation averaging over the conductances
  by $\E^N$. Let $x\in\Lambda$. 
  Assume that there are deterministic conductances $W_e>0$, $e\in E_\Lambda$, such
  that the following hold: 
  \begin{enumerate}
  \item For all $i\in\Lambda$, one has $\sum_{e\in E_\Lambda:i\in e}W_e<\infty$. 
  \item The random walk on the weighted graph $(G,W)$ is transient.
  \item There is a constant $K_x>0$ such that
    for all $N\in\N$ and all $e\in E_N$
    \begin{align}
      \label{eq:bound-sum-cf-over-ce}
      \sum_{f\in E_N:\, x\in f}\E^N\left[\frac{c_f}{c_e}\right]\le \frac{K_x}{W_e},
    \end{align}
    where $W_{i\rho}=W_{i\rho}^{(N)}$ is defined via the wired boundary conditions as
    in \eqref{eq:def-pinning-general}. 
  \end{enumerate}
  Let $\rr^N(W,x\leftrightarrow\rho)$ denote the effective resistance between
  $x$ and $\rho$ in the network $G_N$ with deterministic conductances $W=(W_e)_{e\in E_N}$. 
  Then, the expected number of
  visits in $x$ of $X$ before exiting $\Lambda_N$ is
  bounded by $K_x\rr^N(W,x\leftrightarrow\rho)$, which is bounded 
  uniformly in $N$. As a consequence, the expected number of visits of $x$ by
  the process $X$ on $G$ is finite with the same bound. 
\end{theorem}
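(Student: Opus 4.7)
The plan is to reduce the claim to a quenched Green's function estimate and then invoke Thomson's principle with a carefully chosen test flow. By the coupling hypothesis in the statement, the expected number of visits to $x$ by the process $X$ before it exits $\Lambda_N$ equals $\E^N[G_c^\rho(0,x)]$, where $G_c^\rho(0,x)$ denotes the quenched Green's function of the random walk in the random conductances $c=c^{(N)}$ on $G_N$, killed on hitting $\rho$. The task is to bound this by $K_x\,\rr^N(W,x\leftrightarrow\rho)$.

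The first step is to establish the quenched upper bound $G_c^\rho(0,x)\le c(x)\,\rr_c(x\leftrightarrow\rho)$, where $c(x)=\sum_{f\in E_N:\,x\in f}c_f$. Indeed, started at $x$, the number of visits to $x$ before $\tau_\rho$ is geometric with success probability $P_x^c(\tau_\rho<\tau_x^+)=[c(x)\,\rr_c(x\leftrightarrow\rho)]^{-1}$, the classical identity relating escape probabilities to effective resistance, so $G_c^\rho(x,x)=c(x)\,\rr_c(x\leftrightarrow\rho)$. The strong Markov property at $\tau_x$ then gives $G_c^\rho(0,x)=P_0^c(\tau_x<\tau_\rho)\,G_c^\rho(x,x)\le G_c^\rho(x,x)$.

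The second step invokes Thomson's principle: for any unit flow $\theta$ from $x$ to $\rho$ in $G_N$, one has $\rr_c(x\leftrightarrow\rho)\le \sum_{e\in E_N}\theta(e)^2/c_e$. I would take $\theta$ to be the \emph{deterministic} unit current flow in $(G_N,W)$, so that $\sum_e \theta(e)^2/W_e=\rr^N(W,x\leftrightarrow\rho)$. Writing $c(x)=\sum_{f\ni x}c_f$, exchanging sums, and taking the annealed expectation gives
\begin{align}
\E^N\bigl[G_c^\rho(0,x)\bigr]
\le \sum_{e\in E_N}\theta(e)^2\sum_{f\in E_N:\,x\in f}\E^N\!\left[\frac{c_f}{c_e}\right]
\le K_x\sum_{e\in E_N}\frac{\theta(e)^2}{W_e}=K_x\,\rr^N(W,x\leftrightarrow\rho),
\end{align}
where the second inequality is exactly hypothesis~(3). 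This is the key computation: hypothesis~(3) is precisely tailored to convert the random-conductance energy into the deterministic one.

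For the last step, assumption~(2) yields $\rr(W,x\leftrightarrow\infty)<\infty$. Enlarging $\Lambda_N$ amounts to un-wiring vertices previously identified with $\rho$, which by Rayleigh's monotonicity principle can only raise the effective resistance; hence $\rr^N(W,x\leftrightarrow\rho)$ is non-decreasing in $N$ and converges to $\rr(W,x\leftrightarrow\infty)$, thus uniformly bounded. For the total visit count of $X$ on $G$, the number of visits to $x$ before exit from $\Lambda_N$ is non-decreasing in $N$ and tends to the total visit count, so monotone convergence yields the same finite uniform bound. The only real subtlety is recognizing the correct test flow in Thomson's principle: choosing the deterministic unit current flow is what allows hypothesis~(3) to enter directly, after which everything reduces to standard reversible-chain and electrical-network arguments.
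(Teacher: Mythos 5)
Your proposal is correct and follows essentially the same route as the paper's own proof: the escape-probability identity $E_{c,x}^N[N_x]=c(x)\,\rr^N(c,x\leftrightarrow\rho)$, Thomson's principle applied with the unit current flow of the deterministic network $(G_N,W)$ as test flow, hypothesis (3) to convert the random-conductance energy into $K_x\,\rr^N(W,x\leftrightarrow\rho)$, and Rayleigh monotonicity plus transience to pass to the limit. Your phrasing of the final step via monotone convergence of visit counts is a cosmetic variant of the paper's limit identity for the expected number of visits.
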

\begin{proof}
  Let $E_{\Lambda,0}^{\mathrm{rw}}$ denote the expectation with respect to the process $X$,
  which starts in $0$, on $G$. For any $y\in\Lambda_N$, 
  let $P_{c,y}^N$ and $E^N_{c,y}$ be the probability measure and expectation,
  respectively, underlying the Markovian
  random walk $X$ starting in $y$ on $G_N$ in given conductances~$c$.
  Let $\tau_x^+$ denote the first return
  time to $x$, and $\tau_x$, $\tau_\rho$ the 
    hitting time of $x$, $\rho$, respectively. Let $N_x:=\sum_{n=0}^\infty 1_{\{X_n=x,n<\tau_\rho\}}$
  denote the number of visits to $x$ before visiting $\rho$.
  The expected number of visits of $X$ to $x$ equals
  \begin{align}
    \label{eq:expected-number-of-visits}
    E_{\Lambda,0}^{\mathrm{rw}}\left[\sum_{n=0}^\infty 1_{\{X_n=x\}}\right]
    =&\lim_{N\to\infty}\E^N\left[E_{c,0}^N[N_x]\right].
  \end{align}
  Let $N$ be large enough that $x\in\Lambda_N$.
  Conditionally on $\tau_x<\tau_\rho$, since $c_e>0$ for all $e\in E_N$, the number of visits
    to $x$ before hitting $\rho$ in
  fixed conductances $c$ is geometric with mean 
  \begin{align}
    E_{c,x}^N[N_x]=
P_{c,x}^N(\tau_\rho<\tau_x^+)^{-1}
  =c(x)\rr^N(c,x\leftrightarrow\rho),
\end{align}
where $c(x)=\sum_{f\in E_N:x\in f}c_f$ and $\rr^N(c,x\leftrightarrow\rho)$ denotes
the effective resistance between $x$ and $\rho$ in the network $G_N$ with conductances
$c$; the expression of the escape probability in terms of the effective resistance
is described in \cite[formula~(2.5)]{lyons-peres2016}.
We obtain
\begin{align}
  \label{eq:bound-exp-N-x}
    E_{c,0}^N[N_x]=
    P_{c,0}^N(\tau_x<\tau_\rho)E_{c,x}^N[N_x]
    \le c(x)\rr^N(c,x\leftrightarrow\rho). 
\end{align}
    By  \cite[formula~(2.7)]{lyons-peres2016},
there is a unit flow $\theta=(\theta(i,j))_{i,j\in\Lambda_N:\{i,j\}\in E_N}$ from
$x$ to $\rho$ in this network, i.e.\ $\theta(i,j)=-\theta(j,i)$,
$\sum_{j\in\Lambda_N:\{i,j\}\in E_N}\theta(i,j)=\delta_{ix}-\delta_{i\rho}$,  
such that the effective resistance in the network $G_N$ with deterministic
conductances $W$ is given by
\begin{align}
  \rr^N(W,x\leftrightarrow\rho)
      =\sum_{\{i,j\}\in E_N}W_{ij}^{-1}\theta(i,j)^2. 
\end{align}
By Thomson's principle \cite[Section 2.4]{lyons-peres2016},
with the same flow $\theta$, we obtain an upper bound for the effective resistance
with random conductances
\begin{align}
  \rr^N(c,x\leftrightarrow\rho)\le \sum_{\{i,j\}\in E_N}c_{ij}^{-1}\theta(i,j)^2. 
\end{align}
Using first \eqref{eq:bound-exp-N-x} and then assumption \eqref{eq:bound-sum-cf-over-ce}, we obtain the following bound
for the expected number of visits in $x$ before exiting $\Lambda_N$
\begin{align}
   \E^N&\left[E_{c,0}^N[N_x]\right]
  \le\E^N[c(x)\rr^N(c,x\leftrightarrow\rho)] \nonumber\\
       &\le \sum_{\{i,j\}\in E_N}\sum_{\substack{f\in E_N:\\ x\in f}} \E^N\left[\frac{c_f}{c_{ij}}\right]\theta(i,j)^2
  \le K_x\sum_{\{i,j\}\in E_N}W_{ij}^{-1}\theta(i,j)^2
  \nonumber\\
  &=K_x\rr^N(W,x\leftrightarrow\rho)
       \uparrow_{N\to\infty}K_x\rr(W,x\leftrightarrow\infty)
       \label{eq:upper-bound-resistance}
\end{align}
with a finite limit $\rr(W,x\leftrightarrow\infty)$
because of the transience assumption. In particular, the estimate
in \eqref{eq:upper-bound-resistance} is uniform in $N$. 
Finally, \eqref{eq:expected-number-of-visits} allows us to conclude.  
\end{proof}

Using the bounds in Theorems \ref{thm:est-cosh-euclidean} and
\ref{th:est-cosh-euclidean-highdim},
we apply the criterion from Theorem~\ref{le:random-conductances-trans} to
deduce transience of VRJP. 

\begin{proof}[Proof of Theorem~\ref{thm:vrjp-transient-long-range}]
  We use Model \ref{model:eucl} on $\Z^d$ with the weights $W_{ij}$, $i,j\in\Z^d$, and
  the boxes $\Lambda_N=\{0,\ldots,2^N-1\}^d$ replaced by shifted boxes
    $\tilde\Lambda_N=\Lambda_N+(a_N,\ldots,a_N)$ with $a_N\in\Z$ suitably chosen
    such that $\tilde\Lambda_N\uparrow\Z^d$ and   $0\in\tilde\Lambda_N$.
    We take the graphs $G_N=(\tilde\Lambda_N\cup\{\rho\},E_N)$ with wired boundary
    conditions as specified in Theorem~\ref{le:random-conductances-trans}.
    Let $W_{i\rho}=W_{i\rho}^{(N)}$ be as in Formula \eqref{eq:def-pinning-general}.
  By \cite[Theorem~2]{sabot-tarres2012}, the discrete-time process associated with
  the VRJP on $G_N$ starting at $\rho$ has the
  same distribution as the random walk in random conductances $c_{ij}=W_{ij}e^{u_i+u_j}$,
  $i,j\in\tilde\Lambda_N\cup\{\rho\}$, on $G_N$, where $u_i$, $i\in\tilde\Lambda_N$, are
  jointly distributed according to the law $\nu^{\tilde\Lambda_N}_{W,h}$ given in
  \eqref{eq:u-marginal}.
  Changing the starting point from $\rho$ to 0, one may take the same
  random conductances $W_{ij}e^{u_i+u_j}$, but with respect to the modified
  measure $e^{u_0}\, d\nu^{\tilde\Lambda_N}_{W,h}$ having the density
  $e^{u_0-u_\rho}=e^{u_0}$ with respect to $\nu^{\tilde\Lambda_N}_{W,h}$. This can be seen from
  formula~\eqref{eq:u-marginal} using the shift $\tilde u_i=u_i-u_0$
  and noting that a factor $e^{-u_0}$ is replaced by $e^{-u_\rho}$
  when changing the product
  $\prod_{i\in\Lambda_N}e^{-u_i}$ to $\prod_{i\in\Lambda_N\cup\{\rho\}\setminus\{0\}}e^{-u_i}$. 

  We verify the assumptions 1-3 of Theorem~\ref{le:random-conductances-trans}.
    Assumption 1 holds by hypothesis \eqref{eq:summability-long-range}. 
    Assumption 2 is true by Lemma~\ref{le:baeumler}. 
  Finally, Assumption 3 is verified as follows.
  Let $N\in\N$. 
  The expectation $\E^N$ in Theorem~\ref{le:random-conductances-trans} is with
  respect to the environment for the VRJP starting
  in $0$, not in $\rho$. Hence, 
  given $x\in\Lambda_N$ and $e=\{i,j\}\in E_N$, it remains to estimate
  \begin{align}
\sum_{\substack{f\in E_N:\\ x\in f}}\!\!\E^N\left[\frac{c_f}{c_e}\right]
      =\!\!\sum_{\substack{f\in E_N:\\ x\in f}}\!\E^{\tilde\Lambda_N}_{W,h}\left[\frac{c_f}{c_e}e^{u_0}\right]
    = \!\!\sum_{k\in(\tilde\Lambda_N\cup\{\rho\})\setminus\{x\}}\!\frac{W_{xk}}{W_{ij}}
       \E^{\tilde\Lambda_N}_{W,h}\left[\frac{e^{u_x+u_k}}{e^{u_i+u_j}}e^{u_0}\right].
       \label{eq:est-sum-weights}
  \end{align}
  We apply first H\"older's inequality and then the bound \eqref{eq:eucl-est-cosh-ui} in
  Theorem~\ref{thm:est-cosh-euclidean} to obtain 
  \begin{align}
    &\E^{\tilde\Lambda_N}_{W,h}[e^{u_x+u_k-u_i-u_j+u_0}]\nonumber\\
    & \le \big(\E^{\tilde\Lambda_N}_{W,h}[e^{5u_x}]\E^{\tilde\Lambda_N}_{W,h}[e^{5u_k}]
    \E^{\tilde\Lambda_N}_{W,h}[e^{-5u_i}]\E^{\tilde\Lambda_N}_{W,h}[e^{-5u_j}]
    \E^{\tilde\Lambda_N}_{W,h}[e^{5u_0}]\big)^{\frac15}\le\cdrei
        \label{eq:bound-2-hoch-6}
  \end{align}
  with the constant $\cdrei$ from there;
  for $k=\rho$, note that $\E^{\tilde\Lambda_N}_{W,h}[e^{5u_\rho}]=1\le\cdrei$.
Note that the theorem is applied to
the translated box $\tilde\Lambda_N$ rather than the original box $\Lambda_N$,
using translational invariance of the weights.
Substituting the bound \eqref{eq:bound-2-hoch-6} in \eqref{eq:est-sum-weights},
we obtain
\begin{align}
  \label{eq:bound-k-over-Wij}
  \sum_{f\in E_N: x\in f}\E^N\left[\frac{c_f}{c_e}\right]
  \le \cdrei\sum_{k\in(\tilde\Lambda_N\cup\{\rho\})\setminus\{x\}}\frac{W_{xk}}{W_{ij}}
  =\frac{K}{W_{ij}},
\end{align}
with $K=\cdrei\sum_{k\in\Z^d\setminus\{x\}}W_{xk}<\infty$; by translation invariance,
the constant $K$ does not depend on $x$.
Having checked all its hypotheses, we apply Theorem~\ref{le:random-conductances-trans}.
The limit
$\rr(W,x\leftrightarrow\infty)=\lim_{N\to\infty}\rr^N(W,x\leftrightarrow\rho)$
does not depend on the choice of the
boxes $\tilde\Lambda_N\uparrow\Z^d$. Finally, since the weights $W_{ij}$ are translation invariant,
it also does not depend on $x$. We conclude that the expected number of visits to $x$ is
uniformly bounded by $K\rr(W,0\leftrightarrow\infty)$. 
\end{proof}

\begin{proof}[Proof of Theorem \ref{thm:vrjp-transient-high-d}]
The proof is parallel to the proof of 
Theorem~\ref{thm:vrjp-transient-long-range}, but with some modifications.
We explain these modifications. This time, we use Model \ref{model:high-d}
and take the shifted box $\tilde\Lambda_N$ again, but keep only edges
$\{i,j\}$ with $W_{ij}>0$. We apply Theorem~\ref{le:random-conductances-trans} in the
same way as before. This time, we verify its three assumptions as follows. 
Assumption 1 holds by hypothesis \eqref{eq:cond2-w}. 
    Assumption 2 follows from
  Rayleigh's monotonicity principle \cite[Section 2.4]{lyons-peres2016}
  and transience of nearest-neighbor
  simple random walk in dimensions $d\ge 3$. 
  Finally, Assumption 3 is verified using Theorem
\ref{th:est-cosh-euclidean-highdim} as follows. We assume $\overline W\ge\weins$ with
$\weins=\max\{\wzwei,10^8\}$ and $\wzwei$ as in Theorem \ref{th:est-cosh-euclidean-highdim}, 
which implies $\frac12\overline W^{1/8}\ge 5=:m$. The steps leading to estimate
\eqref{eq:est-sum-weights} and \eqref{eq:bound-2-hoch-6} remain unchanged, 
with the exception that the constant $\cdrei$ is replaced by $2^{2m+1}=2^{11}$;
cf.\ \eqref{eq:eucl-est-cosh-ui2}.
The steps leading to \eqref{eq:bound-k-over-Wij} remain unchanged, with the exception that
this time the constant $K_x:=2^{11}\sum_{k\in\Z^d\setminus\{x\}}W_{xk}<\infty$
may depend on $x$. However, it is uniformly bounded in $x$ by \eqref{eq:cond2-w}:
$K:=\sup_{x\in\Z_d}K_x<\infty$. 
By the same argument as in the proof of Theorem~\ref{thm:vrjp-transient-long-range},
we conclude that the expected number of visits to $x$ is bounded by $K\rr(W,x\leftrightarrow\infty)$.
If the weights $W_{ij}$ are translation invariant, this bound does not depend on $x$. 
\end{proof}

\subsection{Uniform integrability and monotonicity}
\label{subsec:uniform}

Lemma \ref{le:uniform-integrability} is a consequence of Theorems
  \ref{thm:est-cosh-euclidean} and \ref{th:est-cosh-euclidean-highdim},
  as the following proof shows.

\begin{proof}[Proof of Lemma~\ref{le:uniform-integrability}]
  Note that for $\sigma\in\{\pm 1\}$ and some $m>1$ one has
  $\E^{\Lambda_N}_{W,h}[e^{\sigma mu_i}]\leq\max\{\cdrei,2^{2m+1}\}$
for all boxes $\Lambda_{N}\subset \Z^{d}$ of side length $2^{N}$ and $i\in\Lambda_N$. 
This follows from Theorem~\ref{thm:est-cosh-euclidean} together with translation invariance in the case
of Model~\ref{model:eucl} and from Theorem~\ref{th:est-cosh-euclidean-highdim}  in  case of
Model~\ref{model:high-d}.  Given  an arbitrary finite set $\Lambda'\subset \Z^{d}$ there exists a box $\Lambda_{N}$  of side length $2^{N}$ with $\Lambda'\subseteq \Lambda_{N}$ for $N$ large enough.
We endow both of them with wired boundary conditions. 
Given $i\in \Lambda',$ according to \cite[Theorem~1]{sabot-zeng15}  one can couple $u_{i}^{(\Lambda')}$ and 
 $u_{i}^{(\Lambda_{N})}$ such that $e^{u_{i}^{(\Lambda')}},e^{u_{i}^{(\Lambda_{N})}}$ is a one-step martingale.
 Note that $e^{u}$ is called $\psi$ in \cite{sabot-zeng15}.
Since $x\mapsto x^{\sigma m}$ is convex for $m>1,$ Jensen's inequality gives
$\E^{\Lambda'}_{W,h}[e^{\sigma mu_i}]\leq \E^{\Lambda_N}_{W,h}[e^{\sigma m u_i}]\leq \max\{\cdrei,2^{2m+1}\}$
and the claim follows.  
\end{proof}

We need the following monotonicity result for the environment in the weights. 

\begin{fact}[Poudevigne's monotonicity theorem {\cite[Theorem~6]{poudevigne22}}]
  \label{fact:poudevigne}
  \mbox{}\\
  Consider two families of non-negative edge weights and pinnings $W^+,h^+$ and $W^-,h^-$
  on the same vertex set $\Lambda$, satisfying
  $W^+\ge W^-$ and $h^+\ge h^-$ componentwise. 
  For any convex function $f:[0,\infty)\to\R$ and all $i\in\Lambda$, one has 
\begin{align}
  \E^\Lambda_{W^+,h^+}\left[f(e^{u_i})\right]\le
  \E^\Lambda_{W^-,h^-}\left[f(e^{u_i})\right]. 
\end{align}
In particular, for $m\ge 1$ and $\sigma\in\{\pm 1\}$, 
\begin{align}
  \label{eq:inequ-cosh}
  \E^\Lambda_{W^+,h^+}\left[e^{\sigma m u_i}\right]\le
  \E^\Lambda_{W^-,h^-}\left[e^{\sigma m u_i}\right]. 
\end{align}
\end{fact}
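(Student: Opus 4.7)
The plan is to deduce this Fact from Poudevigne's Theorem 6 \cite{poudevigne22} via a martingale coupling, being careful about the direction of Jensen's inequality. Since the higher-weight expectation appears on the \emph{left} of \eqref{eq:inequ-cosh}, the coupling must realize $e^{u_i^+}$ as a conditional expectation of $e^{u_i^-}$, not the other way around. This is consistent with the Sabot--Zeng martingale already invoked in the proof of Lemma~\ref{le:uniform-integrability}: shrinking the box increases the interior pinnings (more of the exterior is absorbed into the $h_i$ via \eqref{eq:def-pinning-general}), so in that coupling the smaller-box environment plays the role of $e^{u_i^+}$ and is indeed a conditional expectation of the larger-box environment.

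First I would reduce to an elementary step. The density \eqref{eq:u-marginal} is smooth and strictly positive in $(W,h)$, so by concatenating monotonicity along a coordinate-wise straight-line path from $(W^-,h^-)$ to $(W^+,h^+)$ it suffices to prove that each single map $W_{jk}\mapsto \E^\Lambda_{W,h}[f(e^{u_i})]$ and $h_j\mapsto \E^\Lambda_{W,h}[f(e^{u_i})]$ is non-increasing, all other parameters fixed. For this elementary step, following Poudevigne, I would construct an augmented probability space carrying jointly distributed environments $u^-$ and $u^+$ (for the two weight values on the single edge) such that $e^{u_i^+} = \E[e^{u_i^-}\mid \mathcal{F}]$ for an appropriate sub-$\sigma$-algebra $\mathcal{F}$. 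Morally, replacing $W_{jk}$ by $W_{jk}+\delta$ amounts to placing an extra parallel edge of weight $\delta$ between $j$ and $k$ and then integrating out its field degree of freedom; this integration is a genuine conditional expectation, preserves the supersymmetric normalization $\E[e^{u_i}]=1$, and contracts $e^{u_i}$ in the convex order. Once the coupling is in hand, Jensen's inequality applied to the convex $f$ gives $f(e^{u_i^+}) \le \E[f(e^{u_i^-})\mid \mathcal{F}]$, and taking expectations yields the first assertion. Specializing to $f(x)=x^m$ and $f(x)=x^{-m}$ with $m\ge 1$, both convex on $(0,\infty)$, produces \eqref{eq:inequ-cosh}.

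The main obstacle is constructing the coupling in the correct direction at the elementary step. The density \eqref{eq:u-marginal} mixes weights and field values nontrivially through the spanning-tree factor $\sqrt{\sum_{T\in\mathcal{S}}\prod_{\{i,j\}\in T} W_{ij}e^{u_i+u_j}}$, so the conditional-expectation structure is not visible from the formula alone. Poudevigne circumvents this by working with the random-walk representation of the VRJP and explicitly averaging over the extra jumps across $\{j,k\}$ allowed by the weight increment. A purely analytic alternative is to differentiate $W_{jk}\mapsto \E^\Lambda_{W,h}[f(e^{u_i})]$ directly and use a supersymmetric Ward identity (see Section~\ref{se:ward-identity}) to rewrite the derivative as the negative of a quantity whose sign is fixed by convexity of $f$; this sidesteps the coupling but requires careful handling of the square-root term in \eqref{eq:u-marginal}. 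Either route ultimately invokes the supersymmetric structure to compensate the positive weight-dependence of the spanning-tree factor against the negative weight-dependence of the Gaussian-type factor $e^{-W_{ij}(\cosh(u_i-u_j)-1)}$, yielding the stated monotonicity.
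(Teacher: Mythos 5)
Your proposal is correct and matches the paper's treatment: this statement is a cited \emph{Fact}, and the paper's entire argument consists of identifying the law of $e^{u_i}$ under $\nu^\Lambda_{W^\pm,h^\pm}$ with the law of $G^\pm(\rho,i)/G^\pm(\rho,\rho)$ in Poudevigne's Theorem~6 and then taking the convex functions $f(x)=x^{\pm m}$, $m\ge 1$ --- exactly your final step, with the inequality in the same (correct) direction. Your additional sketch of the martingale coupling underlying Poudevigne's theorem is consistent in direction with the Sabot--Zeng martingale used in Lemma~\ref{le:uniform-integrability}, but it is not needed (and not supplied) by the paper, which simply invokes the cited result.
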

Recall that $\rho$ denotes the wiring point. In Poudevigne's notation, 
the joint law of
$G^\pm(\rho,i)/G^\pm(\rho,\rho)$, $i\in\Lambda$, from \cite[Theorem~6]{poudevigne22} coincides with
the joint law of $e^{u_i}$, $i\in\Lambda$, with respect to $\nu^\Lambda_{W^\pm,h^\pm}$. 
Taking the convex function $f(x)=e^{\sigma m u}$, $m\ge 1$, yields
\eqref{eq:inequ-cosh}.
Using this monotonicity, we derive bounds for the environment
in the coming sections.

\section{Bounding environments for hierarchical and long-range weights}

\subsection{Hierarchical weights}\label{se:boundslongrange}

To prove Theorem~\ref{thm:est-cosh-euclidean} we compare a box in the Euclidean lattice with long range interactions with
the corresponding box in a hierarchical lattice as follows.

\begin{model}[Complete graph with hierarchical interactions]\label{model:hierarchical}
Fix $N\in\N$. We consider the vertex set consisting of the set of
leaves $\Lambda_N=\{0,1\}^N$ of the binary tree $\T=\cup_{n=0}^N\{0,1\}^n$.
For $i=(i_0,\ldots,i_{N-1}),j=(j_0,\ldots,j_{N-1})\in\{0,1\}^N$, the hierarchical
distance is defined by 
$d_H(i,j)=\min(\{l\in\{0,\ldots,N-1\}:\, i_k=j_k\text{ for all }k\ge l\}\cup\{N\})$;
it is the distance
to the least common ancestor in $\T$. We endow $\T$ with
hierarchical weights
\begin{align}
  W_{ij}^H=w^H(d_H(i,j)), \quad i,j\in\Lambda_N, i\neq j, 
\end{align}
with a weight function $w^H:\N\to(0,\infty)$ and uniform pinning $W_{i\rho}=h^H>0$
for all $i\in\Lambda_N$. 
\end{model}
An illustration of $\T$ is given in Figure \ref{fig:hbaum}. 
 \begin{figure}
 \centerline{ \includegraphics[width=7cm]{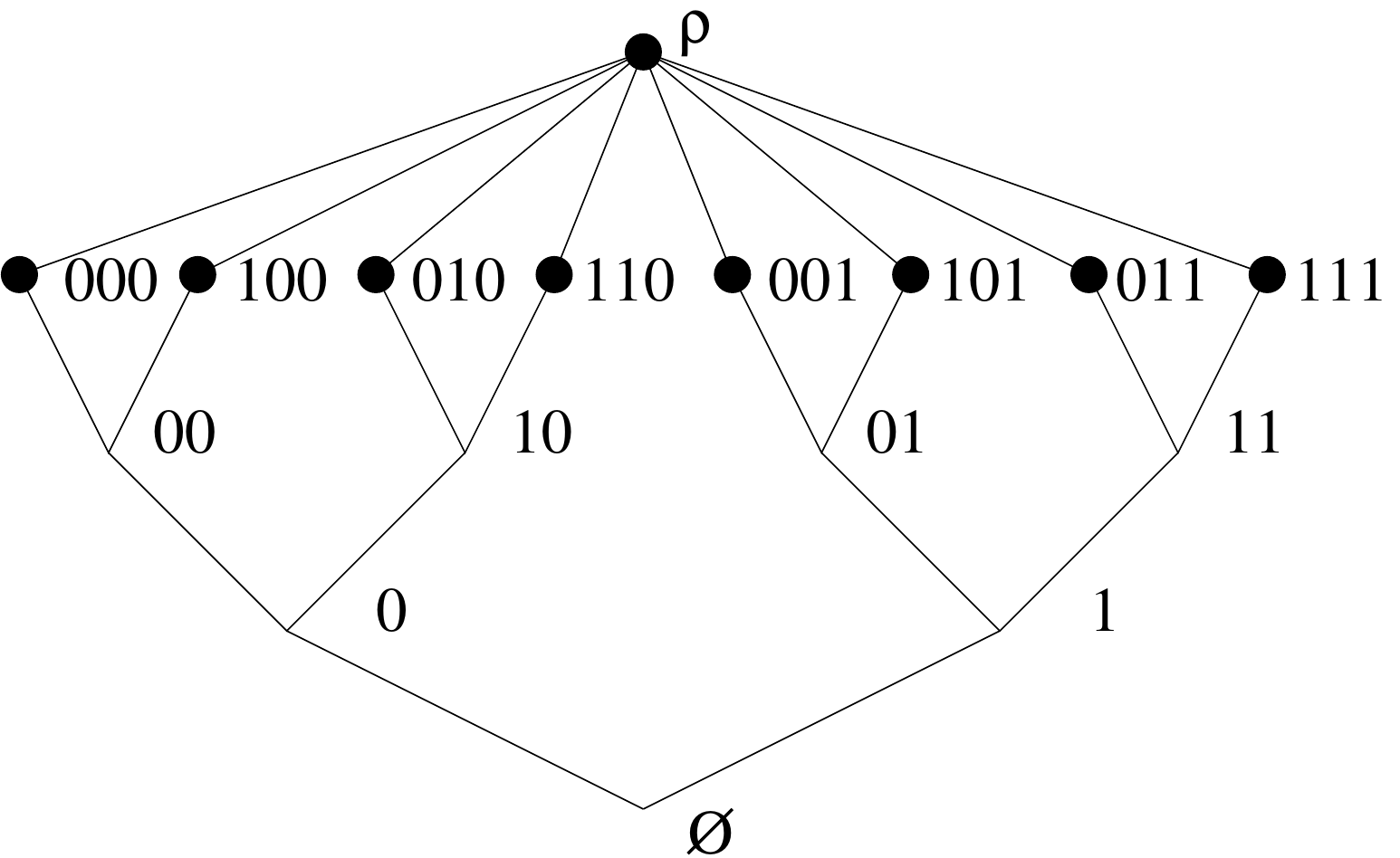}}
 \caption{hierarchical graph for $N=3$ together with the pinning point. In
     Model~\ref{model:hierarchical} any two leaves of the binary tree have
   a positive interaction.}
  \label{fig:hbaum}
\end{figure}

\begin{theorem}[Bounds on the environment -- hierarchical case]
  \label{thm:hierarchical}
  Let $\overline W^H>0$ and $\alpha>1$. 
    Consider Model \ref{model:hierarchical}
  and assume that the weight function
  $w^H$ and the pinning $h^H$ satisfy
  \begin{align}
    & w^H(l)\ge \overline W^H 2^{-2l}l^\alpha \text{ for } l\in\N, \quad
    h^H\ge \overline W^H 2^{-(2+N)} (N+1)^\alpha . 
     \label{eq:ass-weights-pinning}
\end{align}
Then, for all $i\in\Lambda_N$, $\sigma\in\{\pm 1\}$, and all $m\ge 1$, one has  
\begin{align}
  \label{eq:hierarchical-est-cosh-ui}
  \E_{W^H,h^H}^{\Lambda_N}[e^{\sigma m u_i}]\le \ceins 
\end{align}
with a constant $\ceins=\ceins(\overline W^H,\alpha,m)$, uniformly in $N$.
An explicit form of $\ceins$ is given in \eqref{eq:bound-1-d}, below.
\end{theorem}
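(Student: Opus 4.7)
By Fact~\ref{fact:poudevigne}, decreasing the weights $w^H$ and $h^H$ only increases the exponential moments of $u_i$, so it suffices to prove \eqref{eq:hierarchical-est-cosh-ui} when $w^H$ and $h^H$ equal the lower bounds in \eqref{eq:ass-weights-pinning}. The measure $\nu_{W^H,h^H}^{\Lambda_N}$ is then invariant under the action of the automorphism group of the binary tree $\T$ on the leaves, since $d_H$ and the pinning are preserved. Fix a leaf $i$ and, for each level $l\in\{0,\ldots,N\}$, let $A_l(i)$ denote the set of leaves sharing with $i$ an ancestor at depth $l$, so that $|A_l(i)|=2^{N-l}$, with $A_0(i)=\Lambda_N$ and $A_N(i)=\{i\}$. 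Introduce the scale averages
\begin{align*}
U_l\ :=\ |A_l(i)|^{-1}\sum_{j\in A_l(i)}u_j,\qquad l=0,1,\ldots,N,
\end{align*}
so that $U_N=u_i$ and $u_i = U_0 + \sum_{l=1}^{N}(U_l-U_{l-1})$.

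Next I would reduce the problem to an effective model whose vertices are the length scales $0,1,\ldots,N$. The two sister subtrees inside $A_{l-1}(i)$, each containing $2^{N-l}$ leaves pairwise coupled by the weight $w^H(N-l+1)$, yield a total coupling of at least
\begin{align*}
 (2^{N-l})^{2}\,w^H(N-l+1)\ \geq\ \frac{\overline W^H}{4}\,(N-l+1)^\alpha
\end{align*}
between the two sister averages; by Jensen's inequality this dominates a $\cosh$-coupling between $U_l$ and the sister average, hence between consecutive scales $U_{l-1}$ and $U_l$. Similarly the uniform leaf pinning $h^H$ produces an effective pinning on $U_0$ of strength at least $2^{N} h^H\geq \overline W^H(N+1)^\alpha/4$. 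Although the effective model formally still lives on the complete graph with vertex set $\{0,1,\ldots,N\}$, the non-neighboring scale interactions can be discarded by another use of Fact~\ref{fact:poudevigne}, leaving an inhomogeneous one-dimensional VRJP-type model on the path $0\!-\!1\!-\!\cdots\!-\!N$ with neighboring weight of order $\overline W^H(N-l+1)^\alpha$ at bond $\{l-1,l\}$ and a strong pinning at the coarsest scale.

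In this effective one-dimensional model the spanning-tree factor of~\eqref{eq:u-marginal} admits a tractable expression, and each increment $U_l-U_{l-1}$ has variance of order $(\overline W^H(N-l+1)^\alpha)^{-1}$. Since $\alpha>1$, summing these variances yields a variance of $u_i$ bounded uniformly in $N$ by some $C(\alpha)/\overline W^H$. Standard exponential moment bounds for the one-dimensional VRJP environment --- available either by direct computation from~\eqref{eq:u-marginal} on the path or via the $H^{2|2}$ supersymmetric representation used in~\cite{disertori-spencer-zirnbauer2010} --- then yield the bound~\eqref{eq:hierarchical-est-cosh-ui} with the desired $\ceins(\overline W^H,\alpha,m)$.

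\textbf{Main obstacle.} The crucial technical step is the reduction to the effective one-dimensional model. The density~\eqref{eq:u-marginal} contains not only $\cosh$ interactions, which are directly amenable to Jensen and Poudevigne, but also a square root of a sum over spanning trees of the whole complete graph, which a priori mixes all scales. Showing that this determinantal factor respects the hierarchical symmetry in the direction required --- either via an inductive integration starting from the finest scale downward, or via matrix-tree identities exploiting the $2^{N-l}$-fold multiplicity of sister subtrees --- is the delicate point on which the whole strategy hinges.
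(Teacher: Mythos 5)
Your strategy differs genuinely from the paper's, and it has two serious gaps.

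The paper does not pass to scale averages $U_l$. Instead it invokes an exact marginalization result: Corollary~2.3 of \cite{disertori-merkl-rolles2020} shows that the law of $e^{u_{i_1}}$ in the hierarchical model is \emph{exactly} equal to its law in an effective model on the complete graph over a maximal antichain $\{j,i_1,\ldots,i_N\}\subset\T$ (one vertex per scale, but these are actual nodes of the binary tree, not averages of leaves). This sidesteps the determinantal/spanning-tree factor entirely: the reduction is an identity in distribution, not an inequality. The paper then feeds this effective model into a general Lemma~\ref{le:bound-log-cosh-u-i}: by Poudevigne's monotonicity (Fact~\ref{fact:poudevigne}), discard all edges not in a spanning tree containing the path $(i_1,\ldots,i_N,\rho)$; on a tree the increments $u_{\pi_k}-u_{\pi_{k+1}}$ along the path become \emph{independent} with an explicit one-dimensional density, from which each factor $\E[e^{\sigma m(u_{\pi_k}-u_{\pi_{k+1}})}]$ is bounded by a Gaussian integral. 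Since the antichain weights scale like $2^{2l-3}w^H(l)\gtrsim \overline W^H l^\alpha$ and $\alpha>1$, the product converges uniformly in $N$.

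Your proposal has two concrete problems. First, as you flag yourself, the spanning-tree factor $\sqrt{\sum_{T}\prod_{e\in T}W_e e^{u_i+u_j}}$ in \eqref{eq:u-marginal} is not controlled by the Jensen argument: Jensen gives $\sum_{i\in a,j\in b}W_{ij}\cosh(u_i-u_j)\ge |a|\,|b|\,W\cosh(U_a-U_b)$, which bounds the Boltzmann factor from above, but the determinant and the (implicit) normalization do not transform in a compatible way, so you cannot conclude that the marginal of $(U_0,\ldots,U_N)$ is itself a one-dimensional VRJP-type measure. Second, even granting an effective chain, a bound on $\sum_l \mathrm{Var}(U_l-U_{l-1})$ does not yield a bound on $\E[e^{\sigma m u_i}]$ without independence (or at least some form of sub-Gaussian concentration) of the increments; you assert this as "standard" but it is exactly the point where the paper's argument works \emph{because} it reduces to a tree, where the increments genuinely factorize. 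The antichain reduction and the tree-path factorization are the two structural ingredients your plan is missing.
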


The proof of this theorem is based on the following general lemma,
which holds on any finite graph. The underlying strategy has
been suggested by an anonymous associate editor. 

\begin{lemma}
  \label{le:bound-log-cosh-u-i}
Let $G=(\Lambda,E_\Lambda)$ be a finite undirected connected
  graph endowed with positive weights $W=(W_e)_{e\in E_\Lambda}$
  on the edges. Fix a pinning point $\rho\in\Lambda$. 
We write $\E_W^{\Lambda,\rho}$ for the expectation with respect to the
probability measure $\nu^{\Lambda\setminus\{\rho\}}_{\tilde W,h}$, cf.\ \eqref{eq:u-marginal}, 
with weights $\tilde W_{ij}=W_{ij}$ for edges $\{i,j\}\subseteq\Lambda$
and pinnings $h_i=W_{i\rho}$ if $\{i,\rho\}$ is an edge and $h_i=0$
otherwise.

Let $\pi=(i=\pi_1,\ldots,\pi_{N+1}=\rho)$ be a self-avoiding
path from some vertex $i\in\Lambda$ to $\rho$.
 Then, for all $m\ge 1$ and $\sigma\in\{\pm 1\}$, one has 
  \begin{align}
    \E_W^{\Lambda,\rho}[e^{\sigma m u_i}]
    \le \exp\left(\frac{( 2m+1)^2}{8}\sum_{k=1}^N W^{-1}_{\pi_{k}\pi_{k+1}}\right). 
  \end{align}
\end{lemma}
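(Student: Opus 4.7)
The plan is to reduce to the case where the underlying graph is just the path $\pi$, and then exploit the factorization of the resulting measure. Since $G=(\Lambda,E_\Lambda)$ is connected, I would first fix a spanning tree $T_0$ of the augmented graph $(\Lambda\cup\{\rho\},\,E_\Lambda\cup\{\{j,\rho\}:W_{j\rho}>0\})$ that contains $\pi$ and that uses no edge $\{j,\rho\}$ with $j\neq\pi_N$. Since $x\mapsto x^{\sigma m}$ is convex on $[0,\infty)$ for $m\geq 1$ and $\sigma\in\{\pm 1\}$, Poudevigne's monotonicity (Fact~\ref{fact:poudevigne}) lets me decrease all weights of edges outside $T_0$, and all pinnings at non-path vertices, to $0$; this only increases $\E_W^{\Lambda,\rho}[e^{\sigma m u_i}]$. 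After the reduction, the only positive pinning is $h_{\pi_N}=W_{\pi_N\rho}$.

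Next I would iteratively integrate out the non-path leaves of $T_0$. Since $T_0$ is the unique spanning tree in the reduced graph, $\sqrt{T(u)}$ equals $\prod_{e\in T_0}\sqrt{W_e}\,e^{(u_{e^+}+u_{e^-})/2}$. For a non-path leaf $j$ with tree-neighbor $k$ and pinning $0$, all $u_j$-dependent factors collapse, via the substitution $v=u_j-u_k$ and the explicit identity $\int e^{-v/2-W_{jk}(\cosh v-1)}\,dv=\sqrt{2\pi/W_{jk}}$, into the constant $1$ (independent of $u_k$). Hence the marginal on the remaining vertices is \emph{exactly} the measure on the pruned tree $T_0\setminus\{j\}$, and iterating leaves us with the explicit path measure
\begin{align*}
\nu_\pi(du)=\prod_{k=1}^N\sqrt{\tfrac{W_k}{2\pi}}\,\exp\Bigl(-\tfrac{u_{\pi_1}}{2}-\sum_{k=1}^NW_k(\cosh y_k-1)\Bigr)\,du_{\pi_1}\cdots du_{\pi_N},
\end{align*}
where $W_k:=W_{\pi_k\pi_{k+1}}$, $y_k:=u_{\pi_k}-u_{\pi_{k+1}}$, and $u_\rho:=0$.

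The change of variables $(u_{\pi_k})\mapsto(y_k)$ has unit Jacobian and, using $u_{\pi_1}=\sum_k y_k$, makes $\nu_\pi$ factorize into \emph{independent} $y_k$ with densities $\sqrt{W_k/(2\pi)}\,e^{-y/2-W_k(\cosh y-1)}$. The pointwise inequality $\cosh y-1\geq y^2/2$ combined with completing the square then gives
\begin{align*}
\E[e^{\sigma m y_k}]\leq\sqrt{W_k/(2\pi)}\cdot\sqrt{2\pi/W_k}\,e^{(\sigma m-1/2)^2/(2W_k)}=e^{(\sigma m-1/2)^2/(2W_k)},
\end{align*}
and since $(\sigma m-1/2)^2\leq(m+1/2)^2=(2m+1)^2/4$ for $m\geq 1$ and $\sigma\in\{\pm 1\}$, this is at most $e^{(2m+1)^2/(8W_k)}$. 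Multiplying over $k$ by independence yields the bound claimed in the lemma.

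The hard part will be the leaf-integration step: verifying that the $u_j$-integral equals exactly $1$ (not merely a bound) is what forces the prior reduction to $T_0$, since only then is $\sqrt{T(u)}$ explicit and only then do the $e^{-u_j}/\sqrt{2\pi}$ term from \eqref{eq:u-marginal} and the $\sqrt{W_{jk}}\,e^{u_j/2}$ contribution from $\sqrt{T(u)}$ combine in just the right way for the substitution $v=u_j-u_k$ to reduce the integral to the one-dimensional Bessel-type identity used above.
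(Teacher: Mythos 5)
Your proposal follows essentially the same route as the paper's proof: reduce to a spanning tree via Poudevigne's monotonicity (Fact~\ref{fact:poudevigne}), observe that on the tree the increments $y_k=u_{\pi_k}-u_{\pi_{k+1}}$ along the path to $\rho$ are independent with the explicit density $\sqrt{W_k/(2\pi)}\,e^{-y/2-W_k(\cosh y-1)}$, and then bound each factor with $\cosh y-1\ge y^2/2$ and completion of the square; the final arithmetic, including $(\sigma m-\tfrac12)^2\le(2m+1)^2/4$, matches the paper's. The one place you add something is the explicit iterative leaf-pruning argument justifying the independence of the increments, which the paper states without detail.

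There is, however, one technical flaw in the setup: you require a spanning tree $T_0$ containing $\pi$ that uses \emph{no} pinning edge $\{j,\rho\}$ with $j\neq\pi_N$, but such a tree need not exist. For instance, if $\Lambda=\{a,b,\rho\}$, $E_\Lambda=\{\{a,\rho\},\{b,\rho\}\}$, and $\pi=(a,\rho)$, every spanning tree must contain $\{b,\rho\}$. Fortunately the constraint is not needed: if a non-path leaf $j$ has tree-neighbor $\rho$, its $u_j$-dependent factors are $\sqrt{W_{j\rho}/(2\pi)}\,e^{-u_j/2-W_{j\rho}(\cosh u_j-1)}$ (using $u_\rho=0$), which still integrates to $1$ without any change of variables. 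So you should drop the constraint and prune \emph{all} leaves that are neither $\rho$ nor on $\pi$, handling the two cases (tree-neighbor $\neq\rho$ via $v=u_j-u_k$, tree-neighbor $=\rho$ directly) separately. With that correction the argument is sound.
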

\begin{proof}
  Let $T\subseteq E_\Lambda$ be a spanning tree of $G$ such that
  all edges $\{\pi_k,\pi_{k+1}\}$ of the path $\pi$ are contained
  in $T$. Define $W^-=(W^-_e)_{e\in E_\Lambda}$ by
  $W^-_e:=1_{\{ e\in T\}}W_e$. By Fact \ref{fact:poudevigne}, one has 
  \begin{align}
    \label{eq:cosh-hoch-m-bound}
    \E_W^{\Lambda,\rho}[e^{\sigma m u_i}]\le \E_{W^-}^{\Lambda,\rho}[e^{\sigma m u_i}]. 
  \end{align}
  Because $\pi$ is a path in the tree $T$, the increments $u_{\pi_k}-u_{\pi_{k+1}}$,
$1\le k\le N$, along the path 
are independent with respect to  $\nu_{W^-}^{\Lambda,\rho}$ and
distributed according to
\begin{align}
  \nu_{W^-}^{\Lambda,\rho}(u_{\pi_k}-u_{\pi_{k+1}}\in A)
  =\sqrt{\frac{W_{\pi_k\pi_{k+1}}}{2\pi}}\int_A
  e^{-\frac12 u}e^{-{W_{\pi_k\pi_{k+1}}[\cosh u-1]}}\, du
\end{align}
for any Borel set $A\subseteq\R$. 
Consequently, for $\sigma\in\{\pm 1\}$, using $u_i=u_{\pi_1}$ and $u_{\pi_{N+1}}=u_\rho=0$, we obtain 
\begin{align}
\label{eq:exp-sigma-u-i-as-prod}
  \E_{W^-}^{\Lambda,\rho}[e^{\sigma mu_i}]
  =\prod_{k=1}^N \E_{W^-}^{\Lambda,\rho}[e^{\sigma m(u_{\pi_k}-u_{\pi_{k+1}})}]. 
\end{align}
For any $k\in\{1,\ldots,N\}$, abbreviating
$w:=W_{\pi_k\pi_{k+1}}$, we estimate
\begin{align}
  &\E_{W^-}^{\Lambda,\rho}[e^{\sigma m(u_{\pi_k}-u_{\pi_{k+1}})}]
  =\sqrt{\frac{w}{2\pi}}\int_\R
  e^{\sigma mu}e^{-\frac12 u}e^{-{w[\cosh u-1]}}\, du\nonumber\\
  \le& \sqrt{\frac{w}{2\pi}}\int_\R
  e^{\sigma mu}e^{-\frac12 u}e^{-\frac12{wu^2}}\, du
       =\exp\frac{(2\sigma m-1)^2}{8w}
       \le\exp\frac{( 2m+1)^2}{8w}. 
\end{align}
Inserting this in \eqref{eq:exp-sigma-u-i-as-prod} and the result in
\eqref{eq:cosh-hoch-m-bound}, the claim follows. 
\end{proof}

Next, we apply this lemma to deduce the bounds for the hierarchical model,
  using an antichain in the tree $\T$.

\begin{proof}[Proof of Theorem~\ref{thm:hierarchical}]
  By uniform pinning and the symmetry of the weights induced by the hierarchical structure,
  the distribution of $u_i$ is the same
  for all $i\in\Lambda_N$. Therefore, it is enough to prove \eqref{eq:hierarchical-est-cosh-ui}
  for the vertex $i_1:=0^N:=(0,\ldots,0)$. Corollary 2.3 of \cite{disertori-merkl-rolles2020}
  tells us that the distribution of $e^{u_{i_1}}$ in this model
  coincides with the distribution of $e^{u_{i_1}}$ in the corresponding effective model on
  the complete graph with vertex set $A=\{j,i_1,\ldots,i_N\}\subseteq\T$, where
  $j:=(1,0^{N-1})=(1,0,\ldots,0)\in\{0,1\}^N$ and 
  $i_l:=(1,0^{N-l})\in\{0,1\}^{N-l+1}$ for $2\le l\le N$. This is a maximal antichain in the language of
  Section 2.3 of \cite{disertori-merkl-rolles2020} (cf.\ Figure \ref{fig:antichain}).
  Let us review this effective model. 
  \begin{figure}
 \centerline{ \includegraphics[width=8.8cm]{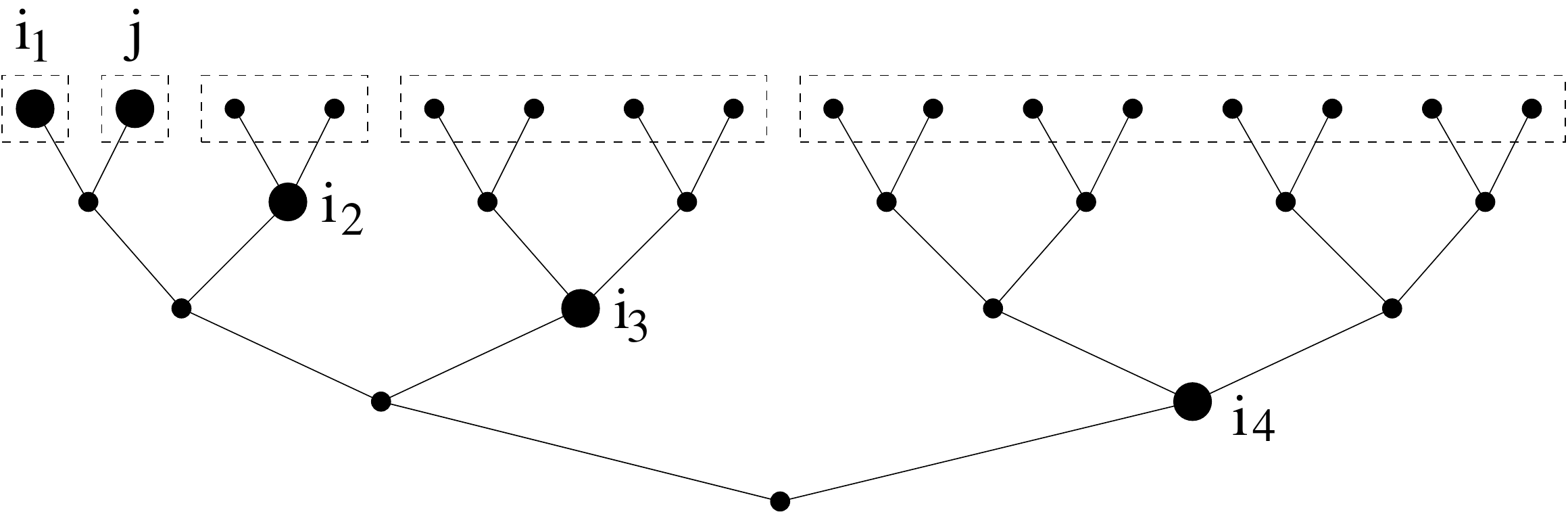}}
  \caption{the maximal antichain $A$}
  \label{fig:antichain}
\end{figure}
Weights and pinning on the antichain, viewed as a complete graph, are given in 
\cite[(2.13) and (2.14)]{disertori-merkl-rolles2020}. However, only the 
  values of the following nearest-neighbor weights and pinnings are relevant for the
  subsequent argument, while the values of all other weights do not play any role: 
  $W_{i_{l-1},i_l}=2^{(l-2)+(l-1)}w^H(l)=2^{2l-3}w^H(l)$, $h_{i_l}:=2^{l-1}h^H$, $2\le l\le N$, $h_{i_1}:=h^H$.
  Here, $2^{l-1}$ equals the number of leaves in $\T$ above the vertex $i_l=(1,0^{N-l})$.
  Corollary 2.3 of \cite{disertori-merkl-rolles2020} yields 
  \begin{align}\label{eq:hierantichain}
    \E_{W^H,h^H}^{\Lambda_N}[e^{m\sigma u_{i_1}}]
    =\E_{W,h}^A[e^{m\sigma u_{i_1}}].
  \end{align}
  We regard the pinning point $\rho$ as an additional vertex at the end of the antichain
  $j,i_1,\ldots,i_N$, writing $i_{N+1}:=\rho$. By assumption
    \eqref{eq:ass-weights-pinning}, one has
    \begin{align}
      W_{i_{l-1}i_l}\ge\tfrac18\overline W^Hl^\alpha 
    \end{align}
    for $l\in\{2,\ldots,N+1\}$. Indeed, in the case $l\le N$,
    this follows from $W_{i_{l-1}i_l}=2^{2l-3}w^H(l)\ge\frac18\overline W^H l^\alpha$,
    and in the case $l=N+1$ from
    \begin{align}
      W_{i_N,i_{N+1}}=h_{i_N}=2^{N-1}h^H\ge\tfrac18\overline W^H (N+1)^\alpha. 
    \end{align}
    We remark that the assumption on $h^H$ in \eqref{eq:ass-weights-pinning}
      is chosen such that the pinning case $l=N+1$ gives a lower bound of the
      same form as the other cases $l\le N$.
    Hence, we can apply Lemma \ref{le:bound-log-cosh-u-i} to
    the complete graph with vertex set $\{j,i_1,\ldots,i_{N+1}\}$ and the
    path $\pi=(i_1,\ldots,i_{N+1})$ to obtain
    \begin{align} 
    \label{eq:bound-1-d}
    \E_{W,h}^A[e^{m\sigma u_{i_1}}]
      \le \exp\left(\frac{(2m+1)^2}{\overline W^H}\sum_{l=2}^\infty l^{-\alpha}\right)
      =:\ceins=\ceins(\overline W^H,\alpha,m). 
  \end{align}
  \end{proof}

\subsection{Long-range weights}
\label{se:long-range-weights}

Fix an arbitrary dimension $d\ge 1$ and $N\in\N$.
Take the box $\Lambda_N:=\{0,1,\ldots,2^N-1\}^d$.
 \begin{figure}
 \centerline{ \includegraphics[width=5cm]{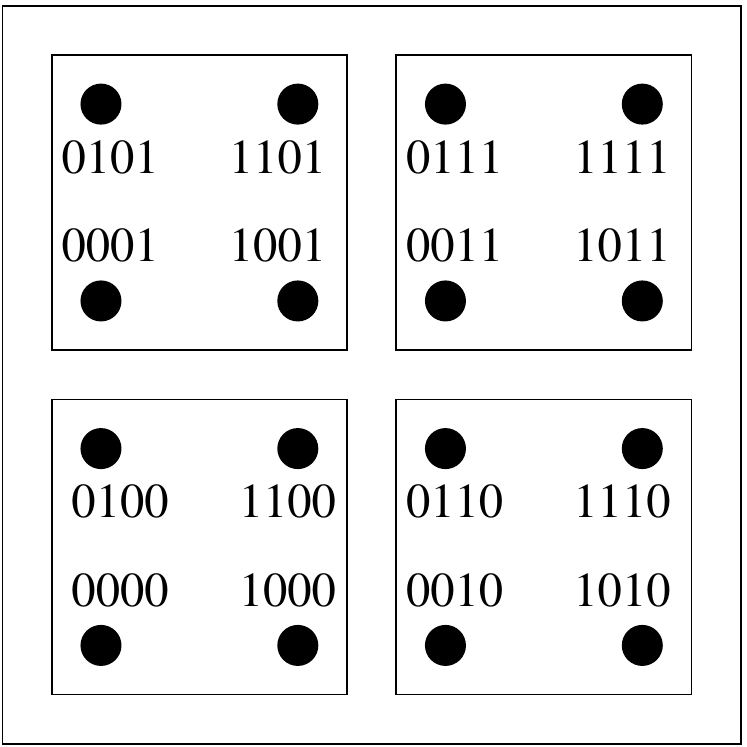}}
  \caption{illustration of the map $\phi$ for $N=2$ and $d=2$}
  \label{fig:hbox}
\end{figure}
In order to compare $\Lambda_N$ with $\{0,1\}^{Nd}$, we define (cf.\ Fig.\ref{fig:hbox})
\begin{align}
  \label{eq:def-phi}
  \phi:\Lambda_N\to\{0,1\}^{Nd},\quad 
  \phi(i_0,\ldots,i_{d-1})=(z_0,\ldots,z_{Nd-1}),
\end{align}
where $z_n\in\{0,1\}$ denotes the $\lfloor n/d\rfloor$-th digit in the
binary expansion $i_l=\sum_{k=0}^{N-1}z_{dk+l}2^k$, with
$l=n\operatorname{mod}d$. The map $\phi$ is a bijection.
The elements of $\{0,1\}^{Nd}$ can be visualized as the leaves of a binary tree.
On this set, the hierarchical distance, i.e.\ the distance to the
least common ancestor in the binary tree, is given by
\begin{align}
  d_H(z,z')=\min(\{n\in\{0,\ldots,Nd-1\}:\, z_m=z_m'\text{ for all }m\ge n\}\cup\{Nd\}). 
\end{align}
On $\Lambda_N$, the natural definition of hierarchical distance would be
  $\lceil d_H(\phi(i),\phi(j))/d\rceil$. With this choice, in Figure \ref{fig:hbox}, 
  any two points in the same small box would
  have distance 1 from each other, while
  points in different boxes would have distance 2.
In the following lemma, we compare
the maximum norm distance with the hierarchical distance.

\begin{lemma}[Comparison of $d_H$ and $\|\cdot\|_\infty$]
\label{le:comparison-dH-eucl}
  For all $i,j\in\Lambda_N$, one has
  \begin{align}
    2\cdot 2^{d_H(\phi(i),\phi(j))/d}\ge 2^{\lceil d_H(\phi(i),\phi(j))/d\rceil}
    >\|i-j\|_\infty. 
  \end{align}
\end{lemma}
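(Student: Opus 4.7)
The plan is to verify the two inequalities separately, treating the first as a routine ceiling estimate and the second as the content of the lemma, which follows from a direct digit-by-digit analysis of the bijection $\phi$.

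For the first inequality, I would simply use the general identity $\lceil x\rceil\le x+1$, which yields $2^{\lceil x\rceil}\le 2\cdot 2^x$ applied to $x=d_H(\phi(i),\phi(j))/d$. This is purely arithmetic and requires no reference to the structure of $\phi$.

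For the second inequality, let $n:=d_H(\phi(i),\phi(j))$ and $K:=\lceil n/d\rceil$. Write $\phi(i)=(z_0,\ldots,z_{Nd-1})$ and $\phi(j)=(z'_0,\ldots,z'_{Nd-1})$. By definition of $d_H$, one has $z_m=z'_m$ for every $m\ge n$. I would then observe that for every $k\ge K$ and every $l\in\{0,\ldots,d-1\}$, the index $m=dk+l$ satisfies $m\ge dK\ge n$, so $z_{dk+l}=z'_{dk+l}$. Since by the definition of $\phi$ in \eqref{eq:def-phi} the binary expansions of $i_l$ and $j_l$ are $i_l=\sum_{k=0}^{N-1}z_{dk+l}2^k$ and $j_l=\sum_{k=0}^{N-1}z'_{dk+l}2^k$, the high-order digits ($k\ge K$) cancel and one obtains
\begin{align}
|i_l-j_l|=\left|\sum_{k=0}^{K-1}(z_{dk+l}-z'_{dk+l})2^k\right|\le \sum_{k=0}^{K-1}2^k=2^K-1<2^K.
\end{align}
Taking the maximum over $l$ gives $\|i-j\|_\infty<2^K=2^{\lceil n/d\rceil}$, as desired.

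The only subtle point is keeping the inequality strict; I would emphasize that the bound $2^K-1$ on each coordinate, rather than $2^K$, is what produces the strict sign, which in turn matters later because the application (presumably in comparing the Euclidean model with the hierarchical one) treats $d_H$ as if it dominates $\log_2\|i-j\|_\infty$. The degenerate case $i=j$ (so $n=0$, $K=0$) is covered by the trivial bound $0<1$. Beyond this, no real obstacle arises: the proof is a bookkeeping exercise with the digits defining $\phi$.
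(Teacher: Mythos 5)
Your proof is correct and follows essentially the same route as the paper's: both isolate the digit indices of $\phi(i)$ and $\phi(j)$ that can disagree, namely those $m=dk+l<d_H$, bound the resulting partial geometric sum by $2^K-1$ with $K=\lceil d_H/d\rceil$, and dispatch the trivial case $i=j$ separately. (The paper phrases the cutoff as $k\le\lfloor(r-1)/d\rfloor$, which coincides with your $k\le K-1$ via the identity $\lfloor(r-1)/d\rfloor=\lceil r/d\rceil-1$ for $r\ge1$; this is bookkeeping, not a different idea.)
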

\begin{proof}
  For $i=j$, the claim is true. We take $i\neq j$ in $\Lambda_N$ and set
  $z=\phi(i)$, $z'=\phi(j)$, and $r=d_H(z,z')$. One has 
  \begin{align}
    \|i-j\|_\infty = \max_{0\le l\le d-1}|i_l-j_l|
    = \max_{0\le l\le d-1}\left|\sum_{k=0}^{N-1}(z_{dk+l}-z_{dk+l}')2^k\right|.
  \end{align} 
  The only non-zero contributions in the above sum come at most from $k$ satisfying
  $dk+l\le r-1$ which implies $k\le (r-1)/d$. It follows that
\begin{align}
  \|i-j\|_\infty
  &\le \max_{0\le l\le d-1}\sum_{k=0}^{\lfloor (r-1)/d\rfloor}|z_{dk+l}-z_{dk+l}'|2^k\nonumber\\
  &\le \sum_{k=0}^{\lfloor (r-1)/d\rfloor}2^k<2^{\lfloor (r-1)/d\rfloor+1}
  =2^{\lceil r/d\rceil},
\end{align}
where we use $|z_{dk+l}-z_{dk+l}'|\le 1$ and $r\in\N_0$.
\end{proof}

Next, we show how the bound on the environment for the Euclidean box with long-range
interactions follows from Theorem~\ref{thm:hierarchical}. 

\begin{proof}[Proof of Theorem~\ref{thm:est-cosh-euclidean}]
  For $m\ge 1$, we apply Fact \ref{fact:poudevigne} to compare the Euclidean long-range
  model, i.e.\ Model~\ref{model:eucl}, but restricted to the box $\Lambda_N$
    with wired boundary conditions, with a hierarchical model defined on the same  box.
Remember that in the Euclidean model, we take 
long-range weights $W^+_{ij}:=W_{ij}=w(\|i-j\|_\infty)$ as in Model~\ref{model:eucl}
and pinning $h^+_j:=h_j$ given
in \eqref{eq:pinning-wired-bc}. 
In the hierarchical model, we take weights 
\begin{align}
  W^-_{ij}:=W^H_{ij}=w^H(d_H(\phi(i),\phi(j)))\text{ with }
  w^H(l):=w(2^{\lceil l/d\rceil}) 
\end{align}
and uniform pinning given by $h^-:=h^H:=\min_{j\in\Lambda_N}h_j$. 
By Lemma~\ref{le:comparison-dH-eucl} and monotonicity of $w$,
\begin{align}
  W_{ij}=w(\|i-j\|_\infty)\ge w\left(2^{\lceil d_H(\phi(i),\phi(j))/d\rceil}\right)
  = W^H_{ij},\quad h_i\ge h^H,
\end{align}
for $i,j\in\Lambda_N$. By monotonicity \eqref{eq:inequ-cosh}, for all $i\in\Lambda_N$,
$\sigma\in\{\pm 1\}$, and $m\ge 1$, one has 
\begin{align}
  \E^{\Lambda_N}_{W,h}\left[e^{\sigma m u_i}\right]\le
  \E^{\Lambda_N}_{W^H,h^H}\left[e^{\sigma m u_i}\right]. 
\end{align}
We identify $\Lambda_N$ with $\{0,1\}^{Nd}$ via the bijection $\phi:\Lambda_N\to\{0,1\}^{Nd}$ from
\eqref{eq:def-phi}. As a result we obtain Model~\ref{model:hierarchical} with $N$
replaced by $Nd$. 
We apply Theorem~\ref{thm:hierarchical} with $\overline W^H=\overline Wd^{-\alpha}2^{-2d}$
and 
\begin{align}
   w^H(l)=&w(2^{\lceil l/d\rceil})
            \ge \overline W \frac{( \lceil l/d\rceil)^\alpha}{2^{2d\lceil l/d\rceil}}
            \ge \overline W \frac{(l/d)^\alpha}{2^{2d(1+l/d)}}
            =\overline W^H 2^{-2l}l^\alpha, 
\end{align}
where we used \eqref{eq:cond1-w}.
We estimate the pinning using that $w$
is decreasing 
\begin{align}
  h^H=\min_{i\in\Lambda_N}h_i
         =\min_{i\in\Lambda_N}\sum_{j\in\Z^d\setminus\Lambda_N}\hspace{-0.5em}w(\|i-j\|_\infty)
       \ge\hspace{-1.5em}\sum_{\substack{j\in\Z^d:\\2^N\le \|j\|_\infty< 2^{N+1}}}
  \hspace{-2em}w(\|j\|_\infty)\ge w(2^{N+1})
       \hspace{-1.5em}\sum_{\substack{j\in\Z^d:\\ 2^N\le\|j\|_\infty<2^{N+1}}}\hspace{-1.5em}1.
\end{align}
The last sum has at least $2^{(N+1)d}$ summands, which is a lower bound on the cardinality of
the set $\{z\in\Z:\,2^N\le|z|<2^{N+1}\}\times\{j\in\Z^{d-1}:\,\|j\|_\infty<2^{N+1}\}$. Hence,
we obtain 
\begin{align}
    &h^H\ge
      2^{(N+1)d} w(2^{N+1})\ge 
      2^{(N+1)d} \overline W \frac{(N+1)^\alpha}{2^{2(N+1)d}}
      \ge \overline W d^{-\alpha}2^{-(2+2d+Nd)} (Nd+1)^\alpha\nonumber\\
  &= \overline W^H 2^{-(2+Nd)} (Nd+1)^\alpha. 
\end{align}
Note that we drop a factor of $2^{2+d}$ in the last inequality,
  uniformly in $N$. However, keeping it would not improve the result.
Thus, assumption \eqref{eq:ass-weights-pinning} of Theorem~\ref{thm:hierarchical}
is satisfied with $N$ replaced by $Nd$ and the estimate \eqref{eq:eucl-est-cosh-ui}
follows with the constant
\begin{align}
  \label{eq:def-cdrei}
  \cdrei(\overline W,d,\alpha,m):=\ceins(\overline W d^{-\alpha}2^{-2d},\alpha,m)
  =\exp\left(\frac{d^\alpha 2^{2d}(2m+1)^2}{\overline W}\sum_{l=2}^\infty l^{-\alpha}\right)
  >1.
\end{align}
\end{proof}

\paragraph*{Discussion}
Note that Theorem \ref{thm:hierarchical} applies in the special case 
$W_{ij}\propto 2^{-(d+2)\lceil d_H(\phi(i),\phi(j))/d\rceil}$ in hierarchical dimension $d\ge 3$, which corresponds to
$w^H(l)\propto 2^{-(d+2)\lceil l/d\rceil}$. In this case, the Green's function
for the Markov jump process on the corresponding infinite hierarchical lattice
with jump rates $W_{ij}$ decays as $G(i,j)=\text{const}\cdot 2^{-(d-2)\lceil d_H(\phi(i),\phi(j))/d\rceil}$, $i\neq j$, see 
\cite[formula (1.3)]{brydges-evans-imbrie1992}; note that our $\lceil d_H(\phi(i),\phi(j))/d\rceil$ 
corresponds to $\log_2|i-j|_H$ in the case $L=2$ in this reference, i.e.\
$G(i,j)=\text{const}/|i-j|_H^{d-2}$. This case is interesting since it mimics the decay of the
inverse Laplacian $-\Delta^{-1}$ on the Euclidean lattice in dimensions
$d\ge 3$.

\section{Bounding environments in high dimension}
\label{sec:H22}

The proof of Theorem~\ref{th:est-cosh-euclidean-highdim}, which is the
goal of this section, is based on the description of the 
random environment for VRJP in terms of the non-linear supersymmetric hyperbolic sigma
model, $\htwo$ model for short. The key tools are a bound for
the expectation of $(\cosh(u_i-u_j))^m$, which is proved in a slightly more
general form in formula \eqref{eq:bound-prod-Be-without-protection} below,
and an estimate from \cite{disertori-spencer-zirnbauer2010}.

\subsection{The non-linear supersymmetric hyperbolic sigma model}
\label{subsec:H22}
The supersymmetric hyperbolic sigma $\htwo$  model  was introduced by
Zirnbauer   \cite{zirnbauer-91} in the context of quantum diffusion.
It can be seen as a statistical mechanical spin model where the spins take 
 values in the supermanifold $H^{2|2}.$ This is the hyperboloid $H^{2}=
 \{(x,y,z)\in \R^{3}:\ x^{2}+y^{2}-z^{2}=-1,\ z>0 \}$ extended by two Grassmann,
 i.e.\ anticommuting, variables $\xi,\eta$ with the constraint
 $z^{2}=1+x^{2}+y^{2}+2\xi \eta.$ For more mathematical background 
 see  \cite{phdthesis-swan2020},
 \cite[Appendix]{disertori-merkl-rolles2020} and 
 \cite{disertori-spencer-zirnbauer2010}.
Changing to horospherical coordinates $(x,y,\xi,\eta )\mapsto (u,s,\overline \psi,\psi),$
cf.\ \cite[eq.\ (2.7)]{disertori-spencer-zirnbauer2010},
where $u,s$ are real and $\overline \psi,\psi$ are Grassmann variables,
the model can be defined as follows.

We consider a finite undirected graph with vertex set $\Lambda$ and edge set $E_\Lambda$. 
Every edge $e=\{i,j\}\in E_\Lambda$ is given a positive weight $W_{ij}=W_{ji}=W_e>0$.
To every vertex $i\in\Lambda$, we associate a pinning strength $h_i\ge 0$
in such a way that every connected component of the graph contains at least one
vertex $i$ with $h_i>0$. 
It is convenient to treat pinning 
and interaction in a unified way. Therefore, we extend the vertex set $\Lambda$
by a special vertex
$\rho\notin\Lambda$ called ``pinning point'' or ``wiring point''.
In addition, we introduce pinning edges
connecting the pinning point with the other vertices. Since we will study the
dependence of the field at any pair of points, we will also add edges connecting
vertices without direct interaction. Therefore, we consider the undirected
\emph{complete} graph $G=(\Lambda\cup\{\rho\},E)$ without direct loops.
Every edge $e\in E$ is given a weight $W_e\ge 0$ as follows.
For $e\in E_\Lambda$, $W_e>0$ is the weight defined above. The weight of pinning edges
$e=\{i,\rho\}$, $i\in\Lambda$, is defined to be the pinning strength $W_e=h_i$.
All other edges $e$ are given weight $W_e=0$.
The subgraph consisting of edges with positive weights is denoted by 
\begin{align}
  \label{eq:def-G-plus}
G_+=(\Lambda\cup\{\rho\},E_+),\quad\text{where }  E_+:=\{ e\in E: W_e>0\}.
\end{align}
Note that $E_+$ consists of $E_\Lambda$ and the pinning edges with positive weight, 
and our assumption on the pinning is equivalent to require that the graph
$G_+$ is connected. 

We assign to every vertex $i$
two real-valued variables $u_i$ and $s_i$ and two
Grassmann variables $\overline\psi_i$ and $\psi_i$. We set
$u_\rho=s_\rho:=0=:\overline\psi_\rho=\psi_\rho$. 
Following  \cite[eq.\ (2.9-2.10)]{disertori-spencer-zirnbauer2010},
for all $e=\{i,j\}$ with $i,j\in\Lambda\cup\{\rho\}$ with $i\neq j$, we set
\begin{align}
  \label{eq:def-Bij}
  B_{ij}=&B_e=B_e(u,s):=\cosh(u_i-u_j)+\frac12(s_i-s_j)^2e^{u_i+u_j},\\
  \label{eq:def-Sij}
  S_{ij}=&S_e=S_e(u,s,\overline\psi,\psi)
           :=B_e+(\overline\psi_i-\overline\psi_j)(\psi_i-\psi_j)e^{u_i+u_j}. 
\end{align} 
For each vertex $i\in\Lambda$, let 
\begin{align}
 \label{eq:def-Bi}
  B_i:=B_{i\rho}=\cosh u_i+\frac12s_i^2e^{u_i},\quad
  S_i:=S_{i\rho}=B_i+\overline\psi_i\psi_ie^{u_i}.   
\end{align}
The action of the non-linear hyperbolic
sigma model in horospherical coordinates is given by, cf.\ \cite[eq.\ (2.8) with $z_{i} =S_i$]{disertori-spencer-zirnbauer2010},
\begin{align}\label{eq:defA}
  A=A(u,s,\overline\psi,\psi):=\sum_{e\in E_\Lambda}W_e(S_e-1)
  +\sum_{i\in\Lambda}h_i(S_i-1)
  =\sum_{e\in E_+}W_e(S_e-1).
\end{align}
The superintegration form of the model is given by, \cite[eq.\ (2.12-2.13)]{disertori-spencer-zirnbauer2010},
\begin{align}
  \sk{f} = \sk{f}^{\Lambda }_{W,h}:=\int_{\R^\Lambda\times\R^\Lambda}
  \prod_{i\in \Lambda}\frac{e^{-u_i}}{2\pi}\, du_i\, ds_i\, 
  \partial_{\overline\psi_i}\partial_{\psi_i}\,
  f e^{-A}
\end{align}
for any function of the variables $u_i,s_i,\overline\psi_i,\psi_i$
such that the integral exists. 
For any integrable function $f=f(u,s)$ which does not depend on the
Grassmann variables, one has
\begin{align}
  \label{eq:def-expectation}
  \sk{f}=\E[f]= \E^{\Lambda }_{W,h}[f]:=&\int_{\R^\Lambda\times\R^\Lambda}f(u,s)\, \mu^{\Lambda }_{W,h}(du\, ds), 
\end{align}
where $\mu^{\Lambda }_{W,h}$ is the probability measure on $\R^\Lambda\times\R^\Lambda$ given by
\begin{align}
  \mu(du\, ds)=  \mu^{\Lambda }_{W,h}(du\, ds):= &
  e^{-\sum_{e\in E_\Lambda}W_e(B_e-1)-\sum_{i\in\Lambda}h_i(B_i-1)}\det D
  \prod_{i\in \Lambda}\frac{e^{-u_i}}{2\pi}\, du_i\, ds_i\nonumber\\
=&
  e^{-\sum_{e\in E_+}W_e(B_e-1)}\det D
  \prod_{i\in \Lambda}\frac{e^{-u_i}}{2\pi}\, du_i\, ds_i,
   \label{eq:def-mu}
\end{align}
and $D=D(u)=(D_{ij})_{i,j\in \Lambda}$ is the weighted Laplacian matrix on $G_+$
with entries 
\begin{align}
  \label{eq:representation-D-with-pinning-old}
  D_{ij}=\left\{
  \begin{array}{ll}
    -W_{ij}e^{u_i+u_j} & \text{if }\{i,j\}\in E_+,\\
    \sum_{k\in \Lambda\cup\{\rho\}:\{i,k\}\in E_+}W_{ik}e^{u_i+u_k} & \text{if }i=j,\\
    0 & \text{otherwise.}
  \end{array}
  \right.
\end{align}
Note that for $i,j\in\Lambda$ the condition $\{i,j\}\in E_+$ is equivalent
to $\{i,j\}\in E_\Lambda$, and that the diagonal entries can also be written
in the form $D_{ii}=\sum_{k\in \Lambda:\{i,k\}\in E_\Lambda}W_{ik}e^{u_i+u_k}+h_ie^{u_i}$. 
Our connectedness assumption on the graph $G_+$ 
guarantees that the matrix $D$ is positive definite. In particular, it is invertible.
The fact that $\mu^{\Lambda }_{W,h}$ is a probability measure
was shown in \cite{disertori-spencer-zirnbauer2010} using supersymmetry. 
Since the $s$-variables, conditionally on $u$, are normally distributed
with covariance $D^{-1}$, and by the matrix-tree theorem
$\det D= \sum_{T\in \mathcal{S}}\prod_{\{i,j \}\in T} W_{ij}e^{u_{i}+u_{j}},$
the $u$-marginal of $\mu^{\Lambda }_{W,h}$ is
given by \eqref{eq:u-marginal}. In particular, for functions depending only
on $u$, the expectations with respect to $\mu^{\Lambda }_{W,h}$ and $\nu^{\Lambda }_{W,h}$
coincide, which allows us to use the same notation $\E^{\Lambda }_{W,h}$.

\subsection{A Ward identity and some applications}
\label{se:ward-identity}

In this section, $G=(\Lambda\cup\{\rho\},E)$ is the complete graph with
weights $W_e\ge 0$, $e\in E$, such that $G_+$ is connected, cf.\
  formula~\eqref{eq:def-G-plus}. 
We use the $\htwo$-model with Grassmann variables still present. 
Note that real functions of even elements of the Grassmann algebra are defined as
formal Taylor series in the Grassmann variables, which are nilpotent.
For example, for an edge $e=\{i,j\}$, the expression $f(S_e)$ is defined by 
$f(S_e)=f(B_e+n_e):=f(B_e)+f'(B_e)n_e$ with
$n_e=n_{ij}=(\overline\psi_i-\overline\psi_j)(\psi_i-\psi_j)e^{u_i+u_j}$.
This Taylor expansion stops after the first order term because 
$n_e^2=0$ by the anticommutativity of the
Grassmann variables.

We will use the following localization result from
\cite{disertori-spencer-zirnbauer2010}.
\begin{lemma}[Ward identity]
  \label{le:ward-id}
  Let $f_e:[1,\infty)\to\R$, $e\in E$,
  be a family of continuously differentiable functions
  such that each $f_e$ and its derivative $f_e'$ are bounded
  by polynomials.
  Then it holds 
\begin{align}
  \label{eq:general-ward-id}
\sk{\prod_{e\in E}f_e(S_e)}=\prod_{e\in E}f_e(1). 
\end{align}  
\end{lemma}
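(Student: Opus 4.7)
The plan is to reduce the Ward identity to the normalization
$\sk{1}^\Lambda_{W,h}=1$, which is the fundamental consequence of
supersymmetry proved in \cite{disertori-spencer-zirnbauer2010}
and persists under any nonnegative additive perturbation of
the weights and pinnings. Concretely, for
$\lambda=(\lambda_e)_{e\in E}$ with $\lambda_e\ge 0$, the modified
measure obtained by replacing $W_e$ with $W_e+\lambda_e$
on interaction edges and $h_i$ with $h_i+\lambda_{\{i,\rho\}}$
on pinning edges again satisfies the connectivity assumption on
$G_+$, so its total mass is one by the same supersymmetric argument.

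First, I would rewrite the action of this perturbed model as
$A^\lambda=A+\sum_{e\in E}\lambda_e(S_e-1)$, so that the normalization
becomes $\sk{\prod_{e\in E}e^{-\lambda_e(S_e-1)}}^\Lambda_{W,h}=1$
for all such $\lambda$. This is already the Ward identity for the
exponential test functions $f_e(x)=e^{-\lambda_e(x-1)}$, for which
$\prod_e f_e(1)=1$. Differentiating this identity in the $\lambda_e$
at $\lambda=0$ then yields $\sk{\prod_{e\in E}(S_e-1)^{k_e}}^\Lambda_{W,h}=0$
whenever $\sum_e k_e\ge 1$. Differentiation under the integral is
justified by the super-exponential damping of the $e^{-W_e(S_e-1)}$
factors for $e\in E_+$, the Gaussian conditional law of the $s$-variables
coming from $\det D$, and the polynomial-in-$S_e$ nature of each
derivative. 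By linearity, this establishes the Ward identity for
arbitrary polynomial test functions $P_e$.

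Finally, I would extend from polynomials to general $C^1$ test
functions $f_e$ with polynomial bounds on $f_e$ and $f_e'$ by a standard
density argument: approximate each $f_e$ uniformly on $[1,R]$ by a
polynomial, and control the tail $\{S_e>R\}$ using the polynomial bounds
on the $f_e$'s together with finiteness of all moments of $S_e$. The
main obstacle will be establishing these moment bounds for edges
$e\notin E_+$, on which the action contains no direct damping factor
for $S_e$. I would address this by expressing
$B_e=\cosh(u_i-u_j)+\tfrac12(s_i-s_j)^2 e^{u_i+u_j}$ in terms of
$u$- and $s$-increments along a path in $G_+$ joining $i$ and $j$,
and applying H\"older's inequality; the requisite single-edge moment
bounds then follow from the damping factors present in the action on
$E_+$, together with the Gaussian conditional law of the $s$-increments.
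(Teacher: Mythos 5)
Your proposal is correct in spirit but takes a genuinely different route from the paper. The paper's proof is a one-line application of the general localization result \cite[Prop.~2 in the appendix]{disertori-spencer-zirnbauer2010}: since each $S_e$ is annihilated by the supersymmetry operator and $S_e=1$ at $u=s=\overline\psi=\psi=0$, the superintegral of $\prod_e f_e(S_e)$ localizes immediately to $\prod_e f_e(1)$. You instead invoke supersymmetry only through the normalization $\sk{1}=1$, apply it to the weight-perturbed model $W_e\mapsto W_e+\lambda_e$ to obtain the exponential Ward identity, differentiate at $\lambda=0$ to get the polynomial case, and then pass to $C^1$ test functions by approximation. Both ultimately rest on the same supersymmetric fact, but your route trades the abstract localization theorem for concrete analytic work: one must justify differentiation under the (signed, Grassmann-expanded) superintegral, and establish moment bounds on $B_e$ for \emph{all} $e\in E$, in particular for $e\notin E_+$ where the action provides no direct damping. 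You correctly flag the latter as the main obstacle and sketch the right remedy (telescoping $u$- and $s$-increments along a path in $G_+$ plus H\"older), but note that the same moment bounds are already needed at the differentiation step, not only at the final approximation step, so they should be established first. A further point to treat carefully is that the approximation must be in $C^1$ (not merely uniform), since $f_e(S_e)=f_e(B_e)+f_e'(B_e)n_e$ involves $f_e'$. Overall your argument is a valid elementary re-derivation of the localization identity for this specific class of observables, at the cost of considerably more bookkeeping than the paper's direct citation.
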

\begin{proof}
  This is a special case of the localization result in
  \cite[Prop.\ 2 in the appendix]{disertori-spencer-zirnbauer2010}.
  The key points are that any $S_e$ is annihilated by the supersymmetry
  operator used in the proposition and $S_{ij}=1$ holds whenever
  $u_k=s_k=\overline{\psi}_k=\psi_k=0$ for $k\in\{i,j\}$. 
\end{proof}

\paragraph*{Notation}
The following notation will be technically convenient. 
The two vertices incident to an edge $e\in E$ are denoted by $e_+$ and $e_-$. This
gives every edge a direction $e_+\to e_-$, which has no physical meaning, but is
used for bookkeeping purposes. Let
\begin{align}
F\in\{-1,0,1\}^{\Lambda\times E}, \quad F_{k,e}=1_{\{ e_+=k\}}-1_{\{ e_-=k\}} 
  \quad\text{for } k\in \Lambda,\, e\in E
\end{align}
denote the signed incidence matrix of $ G$ with $\rho$ removed,
but keeping the pinning edges.
We consider the diagonal matrix 
\begin{align}
  \W=\W(u):=\diag(\W_e,e\in E)\in\R^{ E\times E}
  \quad \text{with}\quad\W_e:=W_ee^{u_{e_+}+u_{e_-}}.
\end{align}
With these notations, we reformulate the weighted Laplacian matrix $D$ from
\eqref{eq:representation-D-with-pinning-old} as 
\begin{align}
  D=F\W F^t\in\R^{\Lambda\times \Lambda}. 
\end{align}
We will also need the following matrices: 
\begin{align}
  &Q=Q^G(u,s):=\diag(Q_e,e\in E) \quad\text{with} \quad Q_e:=\frac{e^{u_{e_+}+u_{e_-}}}{B_e(u,s)},
  \\
  &\G=\G^G_W(u,s):=\sqrt{Q}F^tD^{-1}F\sqrt{Q}\in\R^{ E\times E}.
    \label{eq:def-G}
\end{align}
Let $m_e\ge 0$, $e\in E$.
We set
\begin{align}
  \label{eq:def-M-G}
  M:=\diag(m_e,e\in E).
\end{align}

\begin{lemma}
  \label{le:prod-equals-1}
  The following hold:
  \begin{align}
    \label{eq:susy-with-det}
    &\E\left[\prod_{e\in E}B_e^{m_e}\cdot \det(\Id-M\mathcal{G})\right]=1.
  \end{align}  
  If in addition 
  $0\le m_e<W_e$ for $e\in E_+$ defined in \eqref{eq:def-G-plus}
  and $m_e=0$ otherwise, the equality
  implies
  \begin{equation}\label{eq:bound-prod-Be-without-protection}
 \E\left[\prod_{e\in E_+} (\cosh (u_{e_{+}}-u_{e_{-}}))^{m_e}\right]\leq    \E\left[\prod_{e\in E_+}B_e^{m_e}\right]\le
    \prod_{e\in E_+}\frac{1}{1-\frac{m_e}{W_e}}
      .
  \end{equation}
\end{lemma}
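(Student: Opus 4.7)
The plan is to apply the Ward identity of Lemma~\ref{le:ward-id} with $f_e(x)=x^{m_e}$, obtaining $\sk{\prod_{e\in E}S_e^{m_e}}=1$, and then to carry out the Grassmann integration explicitly. The claimed identity \eqref{eq:susy-with-det} is a rewriting of this; the inequality \eqref{eq:bound-prod-Be-without-protection} will follow by bounding $\det(\Id-M\G)$ from below pointwise.

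To prove \eqref{eq:susy-with-det}, I would first expand $S_e^{m_e}=B_e^{m_e}(1+m_en_e/B_e)$, using $n_e^2=0$, where $n_e=(\overline\psi_{e_+}-\overline\psi_{e_-})(\psi_{e_+}-\psi_{e_-})e^{u_{e_+}+u_{e_-}}$. Setting $\alpha_e:=m_en_e/B_e=m_eQ_e(\overline\psi_{e_+}-\overline\psi_{e_-})(\psi_{e_+}-\psi_{e_-})$, the elements $\alpha_e$ are even, nilpotent, and pairwise commuting, so $\prod_e(1+\alpha_e)=\exp(\sum_e\alpha_e)=\exp\bigl(\sum_{k,l\in\Lambda}[FMQF^t]_{kl}\overline\psi_k\psi_l\bigr)$. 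Combined with the Grassmann part of $e^{-A}$, which contributes $\exp\bigl(-\sum_{k,l}[F\W F^t]_{kl}\overline\psi_k\psi_l\bigr)$, the resulting bilinear has coefficient matrix $\tilde D:=F(\W-MQ)F^t$. Standard Grassmann integration of $\exp(-\overline\psi^t\tilde D\psi)$ then yields $\det\tilde D$, giving
\[
\sk{\prod_{e\in E}S_e^{m_e}}=\E\left[\prod_{e\in E}B_e^{m_e}\cdot\frac{\det\tilde D}{\det D}\right],
\]
with the $\det D$ divisor coming from the analogous integration built into $\mu$ in \eqref{eq:def-mu}. A Sylvester-type manipulation $\det\tilde D/\det D=\det(\Id_\Lambda-D^{-1}FMQF^t)=\det(\Id_E-MQF^tD^{-1}F)$, followed by a similarity transform by $Q^{1/2}$ (legitimate because $M$ and $Q$ are diagonal), rewrites this as $\det(\Id-M\G)$, and setting the Ward identity equal to $1$ yields \eqref{eq:susy-with-det}.

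For the inequality, the key is the pointwise bound $\det(\Id-M\G)\ge\prod_{e\in E_+}(1-m_e/W_e)>0$. I would observe that $P:=\W^{1/2}F^tD^{-1}F\W^{1/2}$, viewed on $\R^{E_+}$, is an orthogonal projection: symmetry is manifest, and $P^2=P$ is immediate from $F\W F^t=D$. Hence $P\preceq\Id$, so $F^tD^{-1}F\preceq\W^{-1}$ on $E_+$, and conjugating by $Q^{1/2}$ yields $\G\preceq\diag(1/(W_eB_e))_{e\in E_+}$. Since $m_e=0$ for $e\in E\setminus E_+$ under the hypothesis of \eqref{eq:bound-prod-Be-without-protection}, one has $\det(\Id-M\G)=\det(\Id_{E_+}-M^{1/2}\G M^{1/2})$; under $m_e<W_e$ and $B_e\ge 1$, the eigenvalues of $M^{1/2}\G M^{1/2}$ all lie in $[0,1)$, and Loewner monotonicity of $\det(\Id-\cdot)$ on such matrices gives
\[
\det(\Id-M\G)\ge\prod_{e\in E_+}\left(1-\frac{m_e}{W_eB_e}\right)\ge\prod_{e\in E_+}\left(1-\frac{m_e}{W_e}\right).
\]
Inserting this into \eqref{eq:susy-with-det} and using $\prod_e B_e^{m_e}\ge 0$ yields the second inequality in \eqref{eq:bound-prod-Be-without-protection}; the first is immediate from $B_e\ge\cosh(u_{e_+}-u_{e_-})$ and $m_e\ge 0$.

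The main technical obstacle I anticipate is the algebraic bookkeeping between $\Lambda\times\Lambda$ and $E\times E$ matrices, both in identifying $\det\tilde D/\det D$ with $\det(\Id-M\G)$ and in verifying that $P$ is a projection; the degeneracy at edges $e\in E\setminus E_+$ (where $\W_e=0$) is harmless for the inequality because $m_e$ vanishes there, so all relevant determinants may be restricted to the block~$E_+$.
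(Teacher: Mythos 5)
Your derivation of \eqref{eq:susy-with-det} mirrors the paper's: expand $S_e^{m_e}=B_e^{m_e}\exp(m_eQ_e\,\overline\psi^tF_{\cdot e}F_{e\cdot}^t\psi)$, collect the Grassmann bilinear into $F(\W-MQ)F^t$, integrate to get $\det(D-FMQF^t)/\det D$, and apply Sylvester plus the $\sqrt{Q}$-conjugation. The divergence is in bounding $\det(\Id-M\G)$ from below. The paper rewrites $\det(\Id-M\G)\det D=\det(F(\W-MQ)F^t)$, observes that this is again a weighted graph Laplacian, and invokes the matrix-tree theorem: $\det(F(\W-MQ)F^t)=\sum_{T\in\cS}\prod_{e\in T}\W_e(1-\tfrac{m_e}{W_eB_e})$, from which the bound follows by dropping tree constraints and using $B_e\ge 1$. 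You instead notice that $P=\W^{1/2}F^tD^{-1}F\W^{1/2}$ restricted to $E_+$ is an orthogonal projection (since $D=F\W F^t$), so $F^tD^{-1}F\preceq\W^{-1}$ on $E_+$, hence $\G\preceq\diag(1/(W_eB_e))$ and $M^{1/2}\G M^{1/2}\preceq\diag(m_e/(W_eB_e))\prec\Id$; Loewner monotonicity of $\det$ on $[0,\Id)$ then gives the same bound. Both arguments are correct. The paper's combinatorial route is self-contained and stays within the matrix-tree machinery already introduced; your operator-theoretic route is shorter conceptually, makes the positivity of $\det(\Id-M\G)$ transparent from the start, and generalizes easily to non-diagonal $M$ or other monotone matrix functions of $\G$. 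The one place to be a bit more careful is the reduction to $E_+$ when some $m_e=0$ on $E_+$ (so $M_+$ is not invertible): the cleanest path is to note $\det(\Id-M\G)=\det(\Id-M^{1/2}\G M^{1/2})$ directly from Sylvester's identity, after which the zero rows/columns for $m_e=0$ contribute factors of $1$ and the Loewner comparison can be carried out on the remaining block.
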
 
\begin{proof}
   Ward identity \eqref{eq:general-ward-id}
   with $f_e(x)=x^{m_e}$  reads
  \begin{align}
  \label{eq:ward-id-without-protection}
    \sk{\prod_{e\in E}S_e^{m_e}}=1. 
  \end{align}
  We calculate for $e=\{i,j\}$, using $(\overline\psi^tF_{\cdot e}F^t_{e\cdot}\psi)^2=0$ and 
  $(\overline\psi^tF_{\cdot e}m_eQ_eF^t_{e\cdot}\psi)^2=0$,

  \begin{align}
    S_e^{m_e}=&\left(B_e+(\overline\psi_i-\overline\psi_j)(\psi_i-\psi_j)e^{u_i+u_j}\right)^{m_e}
    =(B_e+e^{u_i+u_j}\overline\psi^tF_{\cdot e}F^t_{e\cdot}\psi)^{m_e}\nonumber\\
    =&B_e^{m_e}+m_eB_e^{m_e-1}e^{u_i+u_j}\overline\psi^tF_{\cdot e}F^t_{e\cdot}\psi
    =B_e^{m_e}\left(1+\frac{m_e}{B_e}e^{u_i+u_j}
     \overline\psi^tF_{\cdot e}F^t_{e\cdot}\psi \right)\nonumber\\
     =&B_e^{m_e}\left(1+\overline\psi^tF_{\cdot e}m_eQ_eF^t_{e\cdot}\psi \right)
        =B_e^{m_e}\exp\left(\overline\psi^tF_{\cdot e}m_eQ_eF^t_{e\cdot}\psi \right).
     \label{eq:expr-S-e-hoch-me}
\end{align}
Hence, taking a product over $e$ and using $\sum_{e\in E}F_{\cdot e}m_eQ_eF^t_{e\cdot}=FMQF^t$,
we obtain
\begin{align}
  \label{eq:id-1-equals-sk}
1=\sk{\prod_{e\in E}  S_e^{m_e}}
     =&\sk{\exp\left(\overline\psi^tFMQF^t\psi \right)\prod_{e\in E} B_e^{m_e}}.
\end{align}
The Grassmann part in the action $A$ in \eqref{eq:defA} is given by
\begin{align}
  \sum_{\{i,j\}\in E_+}W_{ij}
  (\overline\psi_i-\overline\psi_j)(\psi_i-\psi_j)e^{u_i+u_j}
  &=\sum_{e\in E_+}\overline\psi^tF_{\cdot e}\W_eF^t_{e\cdot}\psi\nonumber\\
  &=\overline\psi^tF\W F^t\psi
  =\overline\psi^tD\psi. 
\end{align}
Using the definition \eqref{eq:def-mu} of $\mu$, we can rewrite \eqref{eq:id-1-equals-sk} as
\begin{align}
  1=&\int\limits_{\R^\Lambda\times\R^\Lambda}\prod_{i\in \Lambda}\partial_{\overline\psi_i}\partial_{\psi_i}
      \exp(-\overline\psi^t(D-FM QF^t)\psi)     \prod_{e\in E}B_e^{m_e}
  \, \frac{\mu(du\, ds)}{\det D}\nonumber\\
  =&\E\left[\prod_{e\in E}B_e^{m_e}\frac{\det(D-FM QF^t)}{\det D}\right].
\end{align}
Using the identity $\det(\Id+AB)=\det(\Id+BA)$, which holds for arbitrary
rectangular matrices $A\in\R^{m\times n}$, $B\in\R^{n\times m}$, and the fact
$M Q=\sqrt{Q}M \sqrt{Q}$, we obtain 
\begin{align}
  \label{eq:rel-det-id-min2}
\frac{\det(D-FM QF^t)}{\det D}
  =& \det(\Id-D^{-1}FM QF^t)\nonumber\\
  =&\det(\Id-M \sqrt{Q}F^tD^{-1}F\sqrt{Q})
  =\det(\Id-M\G),
\end{align}
where in the second expression the identity matrix $\Id$ is indexed by vertices, while
in the third and fourth expression it is indexed by edges.
We conclude claim \eqref{eq:susy-with-det}.

Assume now $0\le m_e<W_e$ for $e\in E_+$ and $m_e=0$ otherwise. We
calculate
\begin{align}
  \label{eq:id-mG-detD}
  \det(\Id-M\G)\det D=& \det(D-FM QF^t)
                      =\det(F(\W-MQ)F^t). 
\end{align}
Note that $F(\W-MQ)F^t$ is a discrete Laplacian with weights
$\W_e-m_eQ_e$ rather than $\W_e$. Edges in $E_+$ have a
positive weight $\W_e-m_eQ_e=\W_e(1-\frac{m_e}{W_eB_e})>0$ since
$B_e\ge 1$. 
Using the matrix tree theorem and writing $\cS$ for the
set of spanning trees of the graph $G_+$, we rewrite the last determinant
as follows
\begin{align}
  \det(F(\W-MQ)F^t)=\sum_{T\in\cS}\prod_{e\in T}(\W_e-m_eQ_e)
  =\sum_{T\in\cS}\prod_{e\in T}\W_e\left(1-\frac{m_e}{W_eB_e}\right).
\end{align}
For each $T\in\cS$, using $B_e\ge 1$ again, we have 
\begin{align}
  \prod_{e\in T}\left(1-\frac{m_e}{W_eB_e}\right)
  \ge \prod_{e\in E_+}\left(1-\frac{m_e}{W_eB_e}\right)
  \ge \prod_{e\in E_+}\left(1-\frac{m_e}{W_e}\right).
\end{align}
Therefore, we obtain 
\begin{align}
  \det(F(\W-MQ)F^t)
  \ge \prod_{e\in E_+}\left(1-\frac{m_e}{W_e}\right) \sum_{T\in\cS}\prod_{e\in T}\W_e
  =\prod_{e\in E_+}\left(1-\frac{m_e}{W_e}\right) \det D.
\end{align}
It follows from \eqref{eq:id-mG-detD} that
\begin{align}
  \det(\Id-M\G)
  \ge \prod_{e\in E_+}\left(1-\frac{m_e}{W_e}\right).
  \end{align}
  Inserting this in \eqref{eq:susy-with-det} and using $\cosh (u_{e_{+}}-u_{e_{-}})\le B_e$, the claim
  \eqref{eq:bound-prod-Be-without-protection} follows. 
\end{proof}

\subsection{Proof of Theorem~\ref{th:est-cosh-euclidean-highdim}}
\label{se:boundshighd}

\begin{proof}[Proof of Theorem~\ref{th:est-cosh-euclidean-highdim}]
  Take $\wzwei$ large enough, to be specified below, and
  let $\overline W\ge \wzwei$. 
By assumption, $W_{ij}\geq \overline W 1_{\{\|i-j\|_{2}=1 \}}=: W^{\rm nn}_{ij}$
and $h_{i}=W_{i\rho }\geq  W^{\rm nn}_{i\rho }=:h^{\rm nn}_{i}$ where $ W^{\rm nn}_{i\rho }$ is defined as in
\eqref{eq:def-pinning-general}. Therefore, for $m\ge 1$,
by Fact~\ref{fact:poudevigne}, we can compare our model with weights $W,h$ with
the nearest neighbor model with weights $W^{\rm nn},h^{\rm nn}$:
\begin{align}
  \label{eq:inequ-cosh-eucl-highdim}
  \E^{\Lambda_{N}}_{W,h}\left[e^{\sigma m u_i}\right]\le
  \E^{\Lambda_{N}}_{W^{\rm nn},h^{\rm nn}}\left[e^{\sigma m u_i}\right]. 
\end{align}
Note that we have $h^{\rm nn}_i=0$ for $i\notin\partial \Lambda_N$, 
  while $h^{\rm nn}_i=W_{i\rho}>0$ for $i\in\partial \Lambda_{N}$.
Therefore, we take 
$j\in \partial \Lambda_{N}$ and use the Cauchy-Schwarz inequality to obtain
\begin{align}
  \label{eq:intermediate-bound-exp-sigma-m}
  \E^{\Lambda_{N}}_{W^{\rm nn},h^{\rm nn}}\left[e^{\sigma m u_i}\right]
&\leq 
 \E^{\Lambda_{N}}_{W^{\rm nn},h^{\rm nn}}\left[e^{2\sigma m(u_i-u_{j})}\right]^{\frac{1}{2}}
\E^{\Lambda_{N}}_{W^{\rm nn},h^{\rm nn}}\left[e^{2\sigma m u_j}\right]^{\frac{1}{2}}.
\end{align}
Assuming now that $\wzwei$ is so large that $\wzwei\ge 2^8$ and 
\cite[Theorem~1]{disertori-spencer-zirnbauer2010} is applicable, we have
\begin{align}
  \label{eq:bound-2-m-plus-1}
  \E^{\Lambda_{N}}_{W^{\rm nn},h^{\rm nn}}\left[e^{2\sigma m (u_i-u_{j})}\right]
  \le
2^{2m}\E^{\Lambda_{N}}_{W^{\rm nn},h^{\rm nn}}\left[(\cosh (u_i-u_{j}))^{2m}\right]\leq 2^{2m+1}
\end{align}
for all $m\in [1,\frac12\overline W^{\frac{1}{8}}]\neq\emptyset.$
Note that this result holds for any choice of the pinning
and in any dimension $d\geq 3.$\\
The next bound uses $u_\rho=0$, the estimate \eqref{eq:bound-prod-Be-without-protection}
from Lemma~\ref{le:prod-equals-1} which is valid on any finite graph, 
and the inequality $W_{j\rho }\geq \overline W$, which follows from
  $j\in \partial \Lambda_{N}$, to obtain
\begin{align}
\E^{\Lambda_{N}}_{W^{\rm nn},h^{\rm nn}}\left[e^{2\sigma m u_{j}}\right]
  \le 2^{2m} \E^{\Lambda_{N}}_{W^{\rm nn},h^{\rm nn}}\left[(\cosh u_j)^{2m}\right]
  &=2^{2m}\E^{\Lambda_{N}}_{W^{\rm nn},h^{\rm nn}}\left[(\cosh (u_{j}-u_{\rho }))^{2m}\right]\nonumber\\
  &\leq  
    \frac{2^{2m}}{1-\frac{2m}{W_{j\rho }}}\leq \frac{2^{2m}}{1-\frac{2m}{\overline W}}\leq
    2^{2m+1},
\end{align}
where the last step is a consequence of 
$2m\leq \overline W^{\frac{1}{8}}$ and $\overline W\geq 2^{8/7}$.
Substituting this and \eqref{eq:bound-2-m-plus-1} in \eqref{eq:intermediate-bound-exp-sigma-m},
the result \eqref{eq:eucl-est-cosh-ui2} now follows. 
\end{proof}

Note that it would suffice to do this proof for $d=3.$
Indeed  for $d\geq 4$ one can  embed $\Z^{3}$ in $\Z^{d}$
and use  Poudevigne's monotonicity (Fact~\ref{fact:poudevigne}) together with the result in $d=3.$

\paragraph{Acknowledgments.}
We are grateful to 
Malek Abdesselam for suggesting to compare the hierarchical model with the long-range
model on $\Z^d$.  We also thank Christophe Sabot, Pierre Tarr\`es, and Xiaolin Zeng 
for useful discussions. We would like to thank an anonymous associate editor
for suggesting a modification of the main results and a major simplification
of their proof. We would also like to thank an anonymous referee for
careful reading and constructive remarks. 

This work was supported by the DFG priority program SPP 2265 Random Geometric Systems.


\begin{thebibliography}{DMR22}

\bibitem[ACK14]{angel-crawford-kozma}
O.~Angel, N.~Crawford, and G.~Kozma.
\newblock Localization for linearly edge reinforced random walks.
\newblock {\em Duke Math. J.}, 163(5):889--921, 2014.

\bibitem[B\"23]{baeumler-2022}
J.~B\"aumler.
\newblock Recurrence and transience of symmetric random walks with long-range
  jumps.
\newblock {\em Electron. J. Probab.}, 28:Paper No. 106, 24, 2023.

\bibitem[BEI92]{brydges-evans-imbrie1992}
D.~Brydges, S.N. Evans, and J.Z. Imbrie.
\newblock Self-avoiding walk on a hierarchical lattice in four dimensions.
\newblock {\em Ann. Probab.}, 20(1):82--124, 1992.

\bibitem[BH21]{bauerschmidt-helmuth-survey2021}
R.~Bauerschmidt and T.~Helmuth.
\newblock Spin systems with hyperbolic symmetry: a survey.
\newblock Proceedings of the ICM 2022, arXiv:2109.02566, 2021.

\bibitem[CFG09]{caputo-faggionato-gaudilliere2009}
P.~Caputo, A.~Faggionato, and A.~Gaudilli\`ere.
\newblock Recurrence and transience for long range reversible random walks on a
  random point process.
\newblock {\em Electron. J. Probab.}, 14:no. 90, 2580--2616, 2009.

\bibitem[DMR22]{disertori-merkl-rolles2020}
M.~Disertori, F.~Merkl, and S.W.W. Rolles.
\newblock The non-linear supersymmetric hyperbolic sigma model on a complete
  graph with hierarchical interactions.
\newblock {\em ALEA Lat. Am. J. Probab. Math. Stat.}, 19(2):1629--1648, 2022.

\bibitem[DSZ10]{disertori-spencer-zirnbauer2010}
M.~Disertori, T.~Spencer, and M.R. Zirnbauer.
\newblock Quasi-diffusion in a 3{D} supersymmetric hyperbolic sigma model.
\newblock {\em Comm. Math. Phys.}, 300(2):435--486, 2010.

\bibitem[DV02]{davis-volkov1}
B.~Davis and S.~Volkov.
\newblock Continuous time vertex-reinforced jump processes.
\newblock {\em Probab. Theory Related Fields}, 123(2):281--300, 2002.

\bibitem[DV04]{davis-volkov2}
B.~Davis and S.~Volkov.
\newblock Vertex-reinforced jump processes on trees and finite graphs.
\newblock {\em Probab. Theory Related Fields}, 128(1):42--62, 2004.

\bibitem[LP16]{lyons-peres2016}
R.~Lyons and Y.~Peres.
\newblock {\em Probability on trees and networks}, volume~42 of {\em Cambridge
  Series in Statistical and Probabilistic Mathematics}.
\newblock Cambridge University Press, New York, 2016.

\bibitem[PA24]{poudevigne22}
R.~Poudevigne-Auboiron.
\newblock Monotonicity and phase transition for the {VRJP} and the {ERRW}.
\newblock {\em J. Eur. Math. Soc. (JEMS)}, 26(3):789--816, 2024.

\bibitem[Sab21]{sabot-polynomial-decay2021}
C.~Sabot.
\newblock Polynomial localization of the 2{D}-vertex reinforced jump process.
\newblock {\em Electron. Commun. Probab.}, 26:Paper No. 1, 9, 2021.

\bibitem[ST15]{sabot-tarres2012}
C.~Sabot and P.~Tarr\`es.
\newblock Edge-reinforced random walk, vertex-reinforced jump process and the
  supersymmetric hyperbolic sigma model.
\newblock {\em J. Eur. Math. Soc. (JEMS)}, 17(9):2353--2378, 2015.

\bibitem[Swa20]{phdthesis-swan2020}
A.~Swan.
\newblock Superprobability on graphs ({PhD} thesis), 2020.

\bibitem[SZ19]{sabot-zeng15}
C.~Sabot and X.~Zeng.
\newblock A random {S}chr\"{o}dinger operator associated with the vertex
  reinforced jump process on infinite graphs.
\newblock {\em J. Amer. Math. Soc.}, 32(2):311--349, 2019.

\bibitem[Zir91]{zirnbauer-91}
M.R. Zirnbauer.
\newblock Fourier analysis on a hyperbolic supermanifold with constant
  curvature.
\newblock {\em Comm. Math. Phys.}, 141(3):503--522, 1991.

\end{thebibliography}
\end{document}